\newtheorem{theorem}{Theorem}[section]
\newtheorem{lemma}[theorem]{Lemma}
\newtheorem{proposition}[theorem]{Proposition}
\newtheorem{corollary}[theorem]{Corollary}
\newtheorem{definition}[theorem]{Definition}
\newenvironment{proof}[1][Proof]{\begin{trivlist}
\item[\hskip \labelsep {\bfseries #1}]}{\end{trivlist}}
\newcommand{\qed}{\nobreak \ifvmode \relax \else
      \ifdim\lastskip<1.5em \hskip-\lastskip
      \hskip1.5em plus0em minus0.5em \fi \nobreak
      \vrule height0.75em width0.5em depth0.25em\fi}
\begin{document}

\title{Variational Principles  for Minkowski Type Problems,
Discrete Optimal Transport, and Discrete Monge-Ampere Equations}

\author{
Xianfeng Gu, Feng Luo, Jian Sun, S.-T. Yau }

\date{}
\maketitle
\begin{abstract}
In this paper, we develop several related finite dimensional
variational principles for discrete optimal transport (DOT),
Minkowski type problems for convex polytopes and discrete
Monge-Ampere equation (DMAE). A link between the discrete optimal
transport, discrete Monge-Ampere equation and the power diagram in
computational geometry is established.
\end{abstract}

\section{Introduction}
\subsection{Statement of results}
The classical Minkowski problem for convex body has influenced the
development of  convex geometry and differential geometry through
out the twentieth century. In its simplest form, it states,

\medskip

\begin{figure}[h]
\centering
\includegraphics[width=0.2\textwidth]{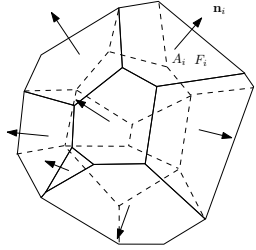}
\caption{Minkowski problem
\label{fig:minkowski_problem}}
\end{figure}

\noindent {\bf Problem 1 (Minkowski problem for compact polytopes
in $\bf R^n$)} \it Suppose $n_1, ..., n_k$ are unit vectors which
span $\bf R^n$ and $A_1, ..., A_k>0$ so that $\sum_{i=1}^k A_i
n_i=0$. Find a compact convex polytope $P \subset \bold R^n$ with
exactly $k$ codimension-1 faces $F_1, ..., F_k$ so that $n_i$ is
normal to $F_i$ and the area of $F_i$ is $A_i$. \rm

\medskip

Minkowski's famous solution to the problem says that the polytope
$P$ exists and is unique up to parallel translation. Furthermore,
Minkowski's proof is variational and suggests an algorithm to find
the polytope.

Minkowski problem for unbounded convex polytopes was solved by
Alexandrov in his influential book on convex polyhedra \cite{alex}.  In particular, he proved the following fundamental
theorem (Theorem 7.3.2) which is one of the main focus of our
investigation.

\begin{figure}[h]
\centering
\includegraphics[width=0.6\textwidth]{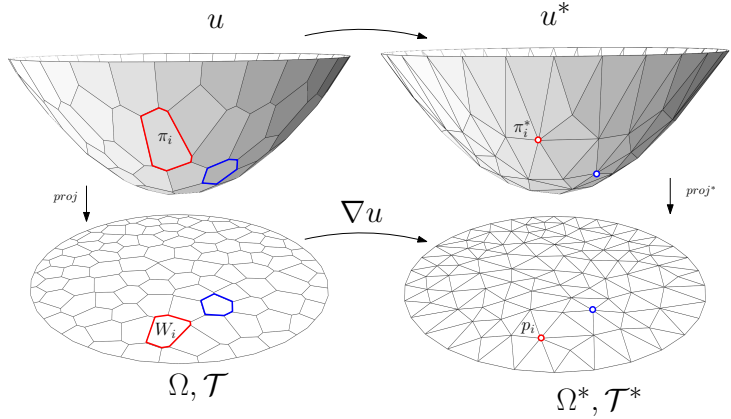}
\caption{Discrete Optimal Transport Mapping (left to right): map $W_i$ to $p_i$. 
Discrete Monge-Ampere equation (right to left): $vol(W_i)$ is the discrete Hessian determinant of $p_i$.
\label{fig:discrete_VP_framework}}
\end{figure}

\begin{theorem} (Alexandrov)
Suppose $\Omega$ is a compact convex polytope with non-empty
interior in $\bf R^n$, $p_1, ..., p_k \subset \bf R^n$ are
distinct $k$ points and $A_1, ..., A_k >0$ so that
$\sum_{i=1}^kA_i=vol(\Omega)$. Then there exists a vector $h=(h_1,
..., h_k) \in \bf R^k$, unique up to adding the constant
$(c,c,..., c)$,  so that the piecewise linear convex function
$$ u(x) = \max_{ x \in \Omega} \{ x \cdot p_i + h_i \}$$ satisfies
$vol(\{ x \in \Omega | \nabla u(x) = p_i\}) =A_i$.
\label{thm:alexandrov}
\end{theorem}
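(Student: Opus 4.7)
The plan is to exhibit a concave functional on $\reals^k$ whose unique critical point, modulo the translation $h\mapsto h+(c,\ldots,c)$, gives the desired height vector. For $h=(h_1,\ldots,h_k)$ set $u_h(x)=\max_i\{x\cdot p_i+h_i\}$, let $W_i(h)=\{x\in\Omega:\nabla u_h(x)=p_i\}$ be the (possibly empty) convex cell where the $i$th affine piece dominates, and set $w_i(h)=\mathrm{vol}(W_i(h))$. The $W_i(h)$ are the intersections with $\Omega$ of the cells of the power diagram determined by the sites $\{p_i\}$ and weights $\{h_i\}$, and together they subdivide $\Omega$. Define the energy
$$E(h) \;=\; \sum_{i=1}^k A_i h_i \;-\; \int_\Omega u_h(x)\,dx.$$
Since $u_h(x)$ is piecewise affine in $x$ with regions of linearity $W_i(h)$, the envelope identity $\partial u_h/\partial h_i=\mathbf{1}_{W_i(h)}$ holds almost everywhere, so $\partial E/\partial h_i=A_i-w_i(h)$. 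Thus critical points of $E$ are exactly the $h$ solving the theorem.

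Next I would establish the variational structure. Because $u_h(x)$ is a pointwise maximum of affine functions of $h$, it is convex in $h$ for each fixed $x$, so $\int_\Omega u_h\,dx$ is convex and $E$ is concave. The identity $u_{h+c\mathbf{1}}=u_h+c$ combined with $\sum A_i=\mathrm{vol}(\Omega)$ shows $E(h+c\mathbf{1})=E(h)$, so I restrict $E$ to the hyperplane $H=\{h:\sum h_i=0\}$. A direct computation of the boundary-shift rate of adjacent cells of the power diagram gives, for $i\ne j$,
$$\frac{\partial^2 E}{\partial h_i\,\partial h_j}\;=\;\frac{\mathrm{vol}_{n-1}(W_i(h)\cap W_j(h))}{|p_i-p_j|}\;\ge\;0,$$
together with vanishing row sums. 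Hence $-\mathrm{Hess}(E)$ is a weighted graph Laplacian for the adjacency graph of the power subdivision; connectedness of $\Omega$ forces connectedness of the adjacency graph among nonempty cells, yielding strict concavity of $E$ on $H$ at nondegenerate configurations.

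For existence I would verify coercivity: for $v\in H\setminus\{0\}$, writing $v^*=\max_i v_i>0$ and using $\sum v_i=0$, elementary estimates on $\int u_{tv}$ give $E(tv)\le t\sum_i A_i(v_i-v^*)-C$, where the coefficient is strictly negative since $A_i>0$ and $v$ is not a multiple of $\mathbf{1}$. Thus $E\to-\infty$ at infinity on $H$ and attains a maximum $\bar h\in H$. Because the ``tie sets'' $\{x\cdot p_i+h_i=x\cdot p_j+h_j\}$ have measure zero for every fixed $h$, the functions $w_i(h)$ are continuous and $E\in C^1(\reals^k)$ with $\nabla E(h)=(A_i-w_i(h))_i$. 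Orthogonality of $\nabla E(\bar h)$ to $H$ gives $\nabla E(\bar h)=\lambda\mathbf{1}$, and summing together with $\sum A_i=\sum w_i(\bar h)=\mathrm{vol}(\Omega)$ forces $\lambda=0$, so $w_i(\bar h)=A_i$. Uniqueness modulo $(c,\ldots,c)$ then follows from strict concavity along any segment joining two putative maximizers.

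The most delicate step is the strict concavity: one must rule out a direction $v\in H\setminus\{0\}$ along which the quadratic form $\sum_{i<j} w_{ij}(h)(v_i-v_j)^2$ with $w_{ij}(h)=\mathrm{vol}_{n-1}(W_i(h)\cap W_j(h))/|p_i-p_j|$ vanishes identically on some interval. This is a combinatorial/topological assertion about the connectedness of the adjacency graph of the power subdivision of $\Omega$, and requires care when cells touch $\partial\Omega$ and at the (lower-dimensional) locus where the subdivision's combinatorics change. This connectedness, together with the $C^1$ regularity of $E$, is where I expect the main work of the proof to lie.
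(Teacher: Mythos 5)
Your proposal is correct, and your functional is, up to sign and an additive constant, the same as the paper's: the paper works with $E(h)=\int_{\Omega}u_h(x)\,dx-\sum_i A_ih_i$, derives $\partial E/\partial h_i=w_i(h)-A_i$ by the same envelope/dominated-convergence argument you sketch (\S 3.2), and obtains your second-derivative formula, with opposite sign, in Proposition~\ref{prop:14}; the strict convexity on the hyperplane $\{\sum_i h_i=0\}$ is proved by exactly your weighted-graph-Laplacian-plus-connectedness argument (Corollary~\ref{coro:21}), including the chain-of-adjacent-cells step you flag as the delicate point. Where you genuinely diverge is the existence mechanism. The paper never maximizes anything: it first shows the set $H=\{h:\mathrm{vol}(W_i(h)\cap\Omega)>0\ \text{for all } i\}$ is a nonempty open convex set (\S 3.1, resting on Proposition~\ref{prop:21}(e)), then shows $\nabla E$ is an injective local diffeomorphism from $H_0=H\cap\{\sum_i h_i=0\}$ into $W=\{A_i>0,\ \sum_i A_i=\mathrm{vol}(\Omega)\}$, and gets surjectivity by an open-and-closed argument in the connected set $W$, with properness extracted from the observation that if $h^{(m)}_i\to\infty$ and $h^{(m)}_j\to-\infty$ then $W_j(h^{(m)})\cap\Omega=\emptyset$. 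Your coercivity estimate $E(tv)\le t\sum_i A_i\bigl(v_i-\max_j v_j\bigr)+O(1)$ on the hyperplane replaces all of this: it buys existence by direct maximization and sidesteps the paper's \S 3.1 entirely, because a critical point automatically satisfies $w_i=A_i>0$ and hence lies in $H_0$; the same observation is what rescues your uniqueness step, since any segment joining two maximizers of the concave $C^1$ function consists of critical points, hence lies in $H_0$ where strict concavity is available. What the paper's route buys instead is the stronger conclusion that $\nabla E:H_0\to W$ is a global diffeomorphism, i.e.\ Alexandrov's infinitesimal rigidity (Corollary~\ref{cor:inft}) and the justification for the Newton-method algorithm, which a pure maximization argument does not yield; the paper also handles a general density $\sigma$, of which the stated theorem is the case $\sigma\equiv 1$. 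One cosmetic remark: the paper's phrase ``maximum points of the convex function $E$'' is a sign slip, and your concave normalization with maximization is the internally consistent version.
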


The functions $u$ and $\bigtriangledown u(x)$ in the theorem will
be called the \it Alexandrov potential \rm and  \it Alexandrov
map.
 \rm Alexandrov's proof is non-variational and non-constructive.
Producing a variational proof of it was clearly in his mind.
Indeed, on page 321 of ~\cite{alex}, he asked if one can find a
variational proof and considered such proof ``is of prime
importance by itself". One of the main results of the paper gives
a (finite dimensional) variational proof Alexandrov's
Theorem~\ref{thm:alexandrov}. Indeed, we give a variational proof
of a general version of Theorem~\ref{thm:alexandrov}
(Theorem~\ref{thm:otp} below) and produce an algorithm for finding
the function $u$. In recent surge of study on optimal transport,
Theorem~\ref{thm:alexandrov} is reproved and is a very special case of the seminal
work of Brenier (see for instance \cite{vil}, Theorem 2.12(ii),
and Theorem 2.32). Brenier proved that the function
$\bigtriangledown u$  minimizes the quadratic cost $\int_{\Omega}
| x - T(x)|^2 dx$ among all measure preserving maps (transport
maps) $T: (\Omega, dx) \to (\bf R^n$, $\sum_{i=1}^k A_i
\delta_{p_i}$). Here $\delta_p$ is the Dirac measure supported at
the point $p$.
Thus our work produces a variational principle
and an algorithm for finding Alexandrov maps with finite images.

\subsection{Variational principles}
Here is a simple framework which we will use to establish
variational principles for solving equations in this paper.
Suppose $X \subset \bf R^k$ is a simply connected open set and
$A(x)=(A_1(x), ..., A_k(x)): X \to \bf R^k$ is a smooth function
so that $\frac{\partial A_i(x)}{\partial x_j}=\frac{\partial
A_j(x)}{\partial x_i}$ for all $i,j$.  Then for any given $B=(B_1,
..., B_k) \in \bf R^k$, solutions $x$ of the equation $A(x) =B$
are exactly the critical points of the function $E(x) =\int_a^x
\sum_{i=1}^k ( A_i(x) -B_i) dx_i$. Indeed, the assumption
$\frac{\partial A_i(x)}{\partial x_j}=\frac{\partial
A_j(x)}{\partial x_i}$ says the differential 1-form $\omega
=\sum_{i=1}^k (A_i(x)-B_i)dx_i$ is closed in the simply connected
domain $X$. Therefore the integral $E(x) =\int^x_a \omega$ is well
defined independently of the choice of the path from $a$ to $x$.
By definition, $\frac{\partial E(x)}{\partial x_i} = A_i(x)-B_i$,
i.e., $\bigtriangledown E(x) =A(x)-B$. Thus $A(x)=B$ is the same
as $\bigtriangledown E(x)=0$.

All variational principles established in this paper use the above
framework. We will use the above framework to give a variational
proof of Alexandrov's theorem.

The paper will mainly deal with piecewise linear (PL) convex
functions. Here are the notations. Given $p_1, ..., p_k \in \bf
R^n$ and $h=(h_1, ..., h_k) \in \bf R^k$, we use $u(x) = u_{h}(x)$
to denote the PL convex function
$$ u_h(x) =\max_{i} \{ x \cdot p_i + h_i\},$$
where $ u \cdot v$ is the dot product. Let $W_i(h) =\{ x \in \bf
R^n
$$| \bigtriangledown u(x) = p_i\}$ be the closed convex polytope.
It is well known that $W_i(h)$ may be empty or unbounded.  One of
the main result we will prove is,

\begin{theorem}  Let $\Omega$ be a compact convex domain in $\bf R^n$
and $\{p_1, ..., p_k\}$ a set of distinct points in $\bf R^n$ and
$\sigma: \Omega \to \bf R$ be a positive continuous function. Then
for any $A_1, ..., A_k >0$ with $\sum_{i=1}^k A_i =\int_{\Omega}
\sigma(x) dx$, there exists $b=(b_1, ..., b_k) \in \bf R^k$,
unique up to adding a constant $(c,..., c)$, so that $\int_{
W_i(b) \cap \Omega}\sigma(x) dx  =A_i$ for all $i$. The vectors
$b$ are exactly maximum points of the convex function 
\begin{equation}
E(h) = \int^h_a \sum_{i=1}^k \int_{W_i(h) \cap \Omega}\sigma(x) dx  dh_i -\sum_{i=1}^k h_i A_i
\label{eqn:energyE}
\end{equation}
on the open convex set $H =\{ h \in \bf
R^k$ $| vol(W_i(h) \cap \Omega)
>0$ for all $i$\}. In fact, $E(h)$ restricted to $H_0= H
\cap \{ h | \sum_{i=1}^k h_i =0\}$ is strictly convex. Furthermore, 
$\nabla u_b$ minimizes the quadratic cost $\int_{\Omega}
| x - T(x)|^2 \sigma dx$ among all transport
maps $T: (\Omega, \sigma dx) \to (\bf R^n$, $\sum_{i=1}^k A_i \delta_{p_i})$. 
\label{thm:otp}
\end{theorem}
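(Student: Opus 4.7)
The plan is to apply the framework of Section~1.2 to the map $w : H \to \R^k$, $w_i(h) = \int_{W_i(h) \cap \Omega} \sigma(x)\, dx$, with target $(A_1, \ldots, A_k)$. First I would verify that $H$ is open (by continuity of the cells $W_i(h)$ in $h$) and convex, hence simply connected; convexity is a standard consequence of the convexity of $u_h(x)$ in $h$ for each fixed $x$, which rules out new degeneracies along a segment in $H$.

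The central step is to verify the mixed-partial symmetry $\partial w_i/\partial h_j = \partial w_j/\partial h_i$ required by the framework. The shared facet $F_{ij} = W_i(h) \cap W_j(h)$ lies in the hyperplane $\{x : x \cdot (p_i - p_j) = h_j - h_i\}$, and varying $h_j$ translates this hyperplane at normal speed $1/|p_i - p_j|$, so an elementary boundary computation gives
\[
  \frac{\partial w_i}{\partial h_j} \;=\; -\frac{1}{|p_i - p_j|}\int_{F_{ij} \cap \Omega} \sigma\, dS \;=\; \frac{\partial w_j}{\partial h_i} \qquad (i \neq j),
\]
while $\partial w_i/\partial h_i$ is pinned down by the identity $\sum_j w_j(h) \equiv \int_\Omega \sigma$. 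Thus the 1-form $\omega = \sum_i (w_i(h) - A_i)\, dh_i$ is closed on the simply connected domain $H$, and the potential $E$ defined in~\eqref{eqn:energyE} satisfies $\nabla E(h) = w(h) - A$; critical points of $E$ are exactly the desired vectors $b$. Moreover the Hessian $(\partial w_i/\partial h_j)$ has non-positive off-diagonal entries and zero row sums, so it is the weighted graph Laplacian on $\{p_1, \ldots, p_k\}$, positive semidefinite with kernel spanned by $(1, \ldots, 1)$. Hence $E$ is convex on $H$, strictly convex on $H_0$, and critical points of $E|_{H_0}$ are unique.

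The main analytic obstacle is existence, which I would obtain from coercivity of $E$ on $H_0$: as $h \in H_0$ escapes every compact subset, some coordinate $h_i$ must drift to $-\infty$ relative to the others, in which case $W_i(h) \cap \Omega$ shrinks to measure zero, $\partial E/\partial h_i = w_i(h) - A_i \to -A_i < 0$, and convexity forces $E(h) \to +\infty$ along that direction. This produces the required extremum $b \in H_0$, whose translates by $(c, \ldots, c)$ exhaust the solution set. Finally, the optimal transport statement is an application of Brenier's theorem: $u_b$ is convex and $\nabla u_b$ is defined almost everywhere on $\Omega$ and pushes $\sigma\, dx|_\Omega$ forward to $\sum_i A_i \delta_{p_i}$, so $\nabla u_b$ is the unique minimizer of the quadratic transport cost among all transport maps.
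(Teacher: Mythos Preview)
Your outline matches the paper's proof through the construction of $E$ and the Hessian analysis: the paper likewise verifies $\partial w_i/\partial h_j = \partial w_j/\partial h_i$ via the facet integral (its Proposition~2.4), identifies $E(h)=\int_\Omega u_h(x)\sigma(x)\,dx$ up to the linear term, and shows the Hessian is a diagonally dominant matrix with one-dimensional kernel $(1,\dots,1)$ (using connectedness of the cell complex in $\Omega$, which you should mention explicitly).

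There are, however, two genuine gaps. First, you do not address why $H$ is \emph{non-empty}; this is not automatic, and the paper handles it by first finding $h$ with all $W_i(h)$ of positive volume in $\mathbf R^n$ and then scaling $h\mapsto\lambda h$ (using $W_i(\lambda h)=\lambda^{-1}W_i(h)$ and $0\in\operatorname{int}\Omega$) to pull the cells inside $\Omega$. Second, your existence argument via coercivity is not correct as written. The set $H_0$ is \emph{bounded}: on the hyperplane $\sum h_i=0$, a large $|h|$ forces $\max_i h_i-\min_j h_j$ to be large, and then $W_j(h)\cap\Omega=\emptyset$ for the minimizing index $j$ since $\Omega$ is compact. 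Hence $h$ cannot ``drift to $-\infty$'', and $E$, being continuous on $\overline{H_0}$, certainly does not tend to $+\infty$. What you actually need is either (i) the paper's route---show $\nabla E:H_0\to W$ is a local diffeomorphism with \emph{closed} image, by taking $h^{(m)}\in H_0$ with $\Phi(h^{(m)})\to a\in W$, proving $\{h^{(m)}\}$ bounded, and passing to a limit---or (ii) a corrected version of your idea: $E$ attains its minimum on the compact set $\overline{H_0}$, and at any boundary point some $w_i=0$, so the directional derivative of $E$ along $e_i-\tfrac1k(1,\dots,1)$ is $-A_i<0$, ruling out a boundary minimum. Either works, but the sentence ``convexity forces $E(h)\to+\infty$'' does not.

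Finally, for the transport claim you invoke Brenier's theorem, which is legitimate; the paper instead reproduces the elementary one-line argument of Aurenhammer \emph{et al.}: since $\{W_i\}$ is a power diagram, $\sum_i\int_{W_i}(|x-p_i|^2+w_i)\sigma\,dx\le\sum_i\int_{U_i}(|x-p_i|^2+w_i)\sigma\,dx$ for any partition $\{U_i\}$ with the same masses, and the $w_i$ terms cancel. This is more self-contained than appealing to Brenier.
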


We remark that Alexandrov's theorem corresponds to $\sigma \equiv 1$.
The existence and the uniqueness of Theorem~\ref{thm:otp} are special
case of important work of Brenier on optimal transport. 
Our main contribution is the variational formulation. 
The Hessian of the function $E(h)$ has a clear geometric meaning
and is easy to compute (see equation~\ref{eq:hessian}), which enables
one to efficiently compute Alexandrov map using Newton's method. 
Furthermore, as a consequence of our proof, we obtain a new proof of 
the infinitesimal rigidity theorem of Alexandrov that 
$ \nabla E: H_0 \to W=\{ (A_1, ..., A_k) \in {\bf R^k} | A_i > 0, \sum_{i=1}^k A_i =\int_{\Omega}
\sigma(x) dx\} $  
is a local diffeomorphism (see Corollary~\ref{cor:inft}).
We remark that Aurenhammer et al. \cite{aure1} also noticed the convexity of the function $E$, 
and they gave an elegant and simple proof of  $\bigtriangledown u_b$ minimizing 
quadratic cost.

\subsection{Discrete Monge-Ampere equation (DMAE)}
Closely related to the optimal transport problem is the
Monge-Ampere equation (MAE). Let $\Omega$ be a compact domain in
$\bf R^n$, $g: \partial \Omega \to \bf R $ and $A: \Omega \times \bf
R \times \bf R^n \to \bf R$ be given. Then the Dirichlet problem
for MAE is to find a function $w: \Omega \to \bf R$ so that

\begin{equation}
\begin{cases} det (Hess(w))(x)  =  A(x, w(x), \bigtriangledown w(x)) \\  w|_{\partial \Omega}   =  g  \end{cases}
\end{equation}

There are vast literature and deep results known on the existence,
uniqueness and regularity of the solution of MAE. We are
interested in solving the discrete version of MAE in the simplest
setting where $A(x, w, \bigtriangledown w) =A(x): \Omega \to \bf
R$ so that $A(\Omega)$ is a finite set.   By taking
Fenchel-Lengendre dual of the Alexandrov potential function $u$,
we produce a finite dimensional variational principle for solving
a discrete Monge-Ampere equation.

In the discrete setting, one of the main tasks is to define the
discrete Hessian determinant for piecewise linear function.  We
define,

\begin{definition} Suppose $(X, \mathcal T)$ is a domain in $\bf
R^n$ with a convex cell decomposition $\mathcal T$ and $w: X \to
\bf R$ is a convex function which is linear on each cell  (a PL
convex function). Then the discrete Hessian determinant of $w$
assigns each vertex $v$ of $\mathcal T$ the volume of the convex hull
of the gradients of the $w$ at top-dimensional cells adjacent to $v$.
\end{definition}

One can define the discrete Hessian determinant of any piecewise
linear function by using the signed volumes. This will not be
discussed here.
With the above definition of discrete Hessian determinant,
following Pogorelov \cite{pogo}, one formulates the Dirichlet
problem for discrete MAE (DMAE) as follows.
\vspace{4mm}

\noindent {\bf Problem 2 (Dirichlet problem for discrete MAE
(DMAE))} {\it Suppose $\Omega =conv(v_1, ..., v_m)$ is an
n-dimensional compact convex polytope in $\bf R^n$ so that  $v_i
\notin conv(v_1, ..., v_{i-1}, v_{i+1}, ..., v_k)$ for all $i$.
Let $p_1, ..., p_k$ be in $int(\Omega)$. Given any $g_1, ..., g_m
\in \bf R$ and $A_1, ..., A_k >0$, find a convex subdivision
$\mathcal T$ of $\Omega$ with vertices exactly $\{ v_1, ..., v_m,
p_1, ..., p_k\}$ and a PL convex function $w: \Omega \to \bf R$
which is linear on each cell of $\mathcal T$ so that

(a) (Discrete Monge-Ampere Equation) the discrete Hessian
determinant of $w$ at $p_i$ is $A_i$,

(b) (Dirichlet condition)  $w(v_i) = g_i$.
\rm}

\medskip

In \cite{pogo}, Pogorelov solved the above problem affirmatively.
He showed that the PL function $w$ exists and is unique. However,
his proof is non-variational. We improve Pogorelov's theorem to
the following.


\begin{theorem}
Suppose $\Omega =conv(v_1, ..., v_m)$ is an n-dimensional
compact convex polytope in $\bf R^n$ so that  $v_i \notin
conv(v_1, ..., v_{i-1}, v_{i+1}, ..., v_k)$ for all $i$ and $p_1,
..., p_k $ are in the interior of $\Omega$.  For any $g_1, ...,
g_k \in \bf R$ and $A_1, ..., A_k
>0$, there exists convex cell decomposition $\mathcal T$ having
$v_i$ and $p_j$ as vertices and a piecewise linear convex function
$w: (\Omega, \mathcal T) \to \bf R$ so that $w(v_i) = g_i,  i=1,..., m$ and the
discrete Hessian determinant of $w$ at $p_j$ is $A_j$, $j=1,..., k$.
In fact, the solution $w$ is the Legendre
dual of $\max\{ x \cdot p_j + h_j, x \cdot v_i - g_i | j=1,..., k,
i=1,..., m\}$ and $h$ is the unique critical point of a strictly
convex function.
\label{thm:dmae}
\end{theorem}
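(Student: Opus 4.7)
The plan is to realize the unknown function $w$ as the Legendre dual of an augmented PL convex function on $\R^n$, so that the Dirichlet data and the discrete Monge-Ampere data enter symmetrically into the framework of Section~1.2. Introduce
$$u_h(x) = \max\bigl\{\,x \cdot p_j + h_j,\ x \cdot v_i - g_i \;:\; 1 \le j \le k,\ 1 \le i \le m\,\bigr\},$$
with variable heights $h=(h_1,\ldots,h_k)$ at the interior points and fixed heights $-g_i$ at the boundary vertices. Let $W_j(h)=\{x:\nabla u_h(x)=p_j\}$ and $V_i(h)=\{x:\nabla u_h(x)=v_i\}$. Since $p_j \in \mathrm{int}(\Omega)$, the convex hull of all gradients is exactly $\Omega$, so $w := u_h^*$ has domain $\Omega$; the dual cell decomposition $\mathcal T$ places a vertex at each $p_j$ (resp.\ $v_i$) whose primal cell has non-empty interior. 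The Legendre identity yields $w(v_i)=g_i$ and $w(p_j)=-h_j$ automatically, so the Dirichlet condition is built into the construction. Moreover, the top-dimensional cells of $\mathcal T$ incident to $p_j$ are dual to the vertices of the polytope $W_j(h)$ and the gradient of $w$ on each such cell is the corresponding vertex; hence whenever $W_j(h)$ is bounded the discrete Hessian determinant of $w$ at $p_j$ equals $\mathrm{vol}(W_j(h))$.

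\textbf{A strictly convex energy.} Finding $w$ therefore reduces to solving $\mathrm{vol}(W_j(h))=A_j$ for $j=1,\ldots,k$. Let $H \subset \R^k$ be the open set on which every $W_j(h)$ is non-empty and bounded and every $V_i(h)$ has non-empty interior. The standard power-diagram calculation
\begin{equation*}
\frac{\partial\,\mathrm{vol}(W_j)}{\partial h_i} \;=\; -\frac{\mathrm{vol}_{n-1}(W_j\cap W_i)}{|p_j - p_i|} \qquad (i\neq j)
\end{equation*}
is symmetric in $(i,j)$, so by Section~1.2 the $1$-form $\omega = \sum_j\bigl(\mathrm{vol}(W_j(h))-A_j\bigr)\,dh_j$ is closed on $H$ and integrates to an energy $F(h)$ whose critical points are the solutions we want. (Equivalently, $F(h)=\int_{\R^n}\bigl(u_h(x)-\max_i(x\cdot v_i-g_i)\bigr)\,dx - \sum_j h_j A_j$, which is manifestly convex in $h$ because $u_h$ is a max of linears in $h$.) Unlike in Theorem~\ref{thm:otp}, the heights of the $v_i$-terms are pinned, so the translational symmetry $h\mapsto h+(c,\ldots,c)$ is broken: the Hessian $[\partial\,\mathrm{vol}(W_j)/\partial h_i]$ has strictly positive row sum $\sum_l \mathrm{vol}_{n-1}(W_j\cap V_l)/|p_j - v_l|$ coming from the leakage of mass from $W_j$ into the $V$-cells under a uniform shift, and together with the non-positive off-diagonal entries this makes the Hessian strictly diagonally dominant, hence positive definite. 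So $F$ is strictly convex on $H$ and admits at most one critical point.

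\textbf{Coercivity (main obstacle) and conclusion.} Existence of the critical point, and therefore of $w$, reduces to coercivity of $F$. This is the \textbf{main obstacle} of the proof: as $h$ approaches $\partial H$ or $\|h\|\to\infty$, one must show $F \to +\infty$. In the direction $h_j\to -\infty$ the cell $W_j(h)$ shrinks to a point, so $\partial_j F \to -A_j<0$; in the direction $h_j\to +\infty$ one must show $\mathrm{vol}(W_j(h))\to\infty$ while $W_j(h)$ stays bounded. Boundedness is exactly where the hypothesis $p_j\in\mathrm{int}(\Omega)$ is used: the vectors $v_i - p_j$ positively span $\R^n$, so the constraints $x\cdot p_j + h_j \ge x\cdot v_i - g_i$ confine $W_j(h)$ to a bounded neighborhood of $p_j$ whose volume nevertheless tends to $\infty$ with $h_j$. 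The complementary hypothesis $v_i\notin\mathrm{conv}(v_1,\ldots,\widehat v_i,\ldots,v_m)$ ensures that each $V_i(h)$ stays non-degenerate throughout $H$ and rules out degenerate limits on $\partial H$. Once coercivity is in hand, $F$ attains a unique interior minimum $h^*\in H$; setting $w := u_{h^*}^*$ produces, by the first paragraph, the required PL convex function on $(\Omega,\mathcal T)$ with $w(v_i)=g_i$ and discrete Hessian determinant $A_j$ at each $p_j$, and uniqueness of $w$ follows from that of $h^*$ via $h_j = -w(p_j)$.
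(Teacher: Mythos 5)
Your setup (augmenting the height vector with the pinned boundary heights $-g_i$, dualizing, and integrating the closed $1$-form $\sum_j(\mathrm{vol}(W_j(h))-A_j)\,dh_j$) is exactly the paper's architecture, and your explicit formula $F(h)=\int_{\R^n}\bigl(u_h(x)-\max_i(x\cdot v_i-g_i)\bigr)dx-\sum_j h_jA_j$ is a nice addition the paper does not write down. But your strict-convexity argument has a genuine gap: the claim that \emph{every} row sum of the Hessian, $\sum_l \mathrm{vol}_{n-1}(W_j\cap V_l)/|p_j-v_l|$, is strictly positive is false in general. A cell $W_j$ in the interior of the cluster of $p$'s may share codimension-$1$ faces only with other $W$-cells and with no $V$-cell, making its row sum zero; the matrix is only weakly diagonally dominant, with strict dominance guaranteed merely in \emph{some} row (some $W_i$ must abut an unbounded $V$-cell, since the $W_j$'s are bounded and the cells tile $\R^n$). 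Positive definiteness then needs the irreducibility/connectivity argument of the paper's Lemma~\ref{lemma:234}, which reuses the chain argument of Corollary~\ref{coro:21}: a null vector is constant across cells sharing codimension-$1$ faces, hence globally constant, and the one strictly dominant row forces that constant to vanish. This is fixable, but as stated your dominance claim is wrong.

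The existence half --- which you yourself flag as the main obstacle --- is asserted rather than proved, and one stated ingredient is incorrect: $F$ does \emph{not} tend to $+\infty$ as $h\to\partial H$. Your own integral formula shows $F$ extends continuously (indeed convexly and $C^1$) across $\partial H$ to all of $\R^k$, because each $W_j(h)$ is bounded or empty for \emph{every} $h$ by Proposition~\ref{prop:21}(c) and $p_j\in \mathrm{int}(\Omega)$; what fails on $\partial H$ is only that some $\mathrm{vol}(W_j)=0\neq A_j$, so no critical point sits there. Moreover your two single-coordinate limits ($h_j\to\pm\infty$) do not cover the mixed degenerations. The paper instead proves surjectivity of $\nabla E: H\to\{A_i>0\}$ by an open-and-closed argument, where closedness rests on a three-case boundedness analysis of a sequence $h^{(i)}$ with convergent image: (a) $h^{(i)}_j\to-\infty$ forces $W_j(h^{(i)})=\emptyset$ since $p_j\in \mathrm{int}(\mathrm{conv}(v_1,...,v_m))$; (b) one coordinate $\to+\infty$ with another bounded forces the bounded one's cell to be swallowed; (c) all coordinates $\to+\infty$ forces total volume $\to\infty$. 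Your coercivity route can be rescued --- with $F$ extended to $\R^k$, convexity reduces existence of a minimizer to coercivity along every ray, and the needed derivative estimates along rays are essentially the paper's three cases in disguise --- but those quantitative steps are precisely what is missing from your write-up.
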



The paper is organized as follows. In \S2, we recall briefly some
basic properties of piecewise linear convex functions, their dual
and power diagrams. Theorems~\ref{thm:otp} and~\ref{thm:dmae} are proved in \S3 and
\S4.

\section{Preliminary on PL convex functions, their duals and power diagrams}

We collect some well known facts about PL convex functions, their
Legendre-Fenchel duals and their relations to power diagrams in
this sections. Most of the proofs are omitted.  See Aurenhammer~\cite{Amgous},
Passera and Rullg{\aa}rd~\cite{passare}, Siersmas and van Manen~\cite{Siada} and others for details.

The following notations will be used. For $u, v, p_1, ..., p_k \in
\bf R^n$, we use $u \cdot v$ to denote the dot product of $u, v $
and $conv(p_1, ..., p_k)$ to denote the convex hull of $\{p_1,
..., p_k\} \subset \bf R^n$.  A convex polyhedron is the
intersection of finitely many closed half spaces. A convex
polytope is the convex hull of a finite set. The relative interior
of a compact convex set $X$ will be denoted by $int(X)$.

\subsection{Legendre-Fenchel dual and PL convex functions}

The \it domain \rm of a function $f: \bf R^n \to$$ (-\infty,
\infty]$, denoted by $D(f)$, is the set $\{ x \in \bf R^n|$$f(x) <
\infty\}$.  A function $f$ is called \it proper \rm if $D(f) \neq
\emptyset$.  For a proper function $f: \bf R^n \to (-\infty,
\infty]$, the \it Legendre-Fenchel duality \rm (or simply the
dual) of $f$ is the proper function  $f^*: \bf R^n \to $$(-\infty,
\infty]$ defined by
$$ f^*(y) =\sup\{ x \cdot y - f(x) | x \in  \bf R^n\}.$$  It is well known that $f^*$ is a proper,
lower semi continuous convex function.  For instance, for the
linear function $f(x) = a \cdot x+b$, its dual $f^*$ has domain
$D(f^*)=\{a\}$ so that $f^*(a)=-b$. The Legendre-Fenchel duality
theorem says that for a proper lower semi continuous convex
function $f$, $(f^*)^* =f$.

For $P=\{p_1, ..., p_k\} \subset \bf R^n$ and $h=(h_1, ..., h_k)
\in \bf R^k$, we define the piecewise linear (PL) convex function
$u_h(x)$ to be

\begin{equation}\label{eq:PL}
u(x) =(u_h(x)=u_{h, P}(x)) =\max\{ p_i \cdot x + h_i | i=1,...,
k\}
\end{equation} The domain $D(u^*)$ of the dual $u^*$ is the convex
hull $conv(p_1, ..., p_k)$ so that
\begin{equation}\label{eq:dual}
 u^*(y) = \min\{ -\sum_{i=1}^k t_i h_i |  t_i \geq 0, \sum_{i=1}^k
 t_i =1, \sum_{t=1}^k t_i p_i=y\}
\end{equation}
(See theorem 2.2.7 of H{\"o}rmander's book on Notions of Convexity
\cite{hor}). In particular, $u^*$ is PL convex in the domain
$D(u^*)$. For instance if $h=0$, then $u^*(y)=0$ on $D(u^*)$.
Another useful consequence is,

\begin{corollary} \label{coro:11}  If $p_i \notin conv(p_1, ..., p_{i-1}, p_{i+1}, ..., p_k)$,
then
\begin{equation}\label{eq:dualvalue}
u^*(p_i) = -h_i.
\end{equation}
\end{corollary}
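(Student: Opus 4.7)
The plan is to read off the value of $u^{*}(p_{i})$ directly from the explicit formula \eqref{eq:dual} for the Legendre--Fenchel dual, and to use the hypothesis $p_{i}\notin conv(p_{1},\ldots,p_{i-1},p_{i+1},\ldots,p_{k})$ to show that, at $y = p_{i}$, the feasible set of the minimization problem degenerates to a single point. Since $u^{*}$ is PL convex on its domain $conv(p_{1},\ldots,p_{k})$, this reduction immediately evaluates $u^{*}(p_{i})$.

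For the upper bound, I would exhibit the obvious feasible tuple $t_{i}=1$, $t_{j}=0$ for $j\neq i$: it satisfies the constraints $t_{j}\ge 0$, $\sum_{j}t_{j}=1$, $\sum_{j}t_{j}p_{j}=p_{i}$, and produces objective value $-\sum_{j}t_{j}h_{j}=-h_{i}$, giving $u^{*}(p_{i})\le -h_{i}$. For the matching lower bound, I would argue that this is in fact the \emph{only} feasible tuple. Suppose for contradiction that some feasible $(t_{1},\ldots,t_{k})$ has $t_{i}<1$; then $1-t_{i}>0$, and rewriting $\sum_{j}t_{j}p_{j}=p_{i}$ as $(1-t_{i})p_{i}=\sum_{j\neq i}t_{j}p_{j}$ and dividing by $1-t_{i}$ yields
\[
p_{i}=\sum_{j\neq i}\frac{t_{j}}{1-t_{i}}\,p_{j},
\]
where the coefficients are nonnegative and sum to $1$. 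This exhibits $p_{i}$ as a convex combination of $\{p_{j}: j\neq i\}$, contradicting the hypothesis. Hence every feasible tuple has $t_{i}=1$ and (by nonnegativity and the sum constraint) $t_{j}=0$ for $j\neq i$, so the minimum in \eqref{eq:dual} equals $-h_{i}$.

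I do not foresee any serious obstacle: the substantive content is entirely the elementary fact that a point not lying in the convex hull of the remaining $p_{j}$ admits only the trivial convex representation in terms of the full set, and the rest is mechanical manipulation of \eqref{eq:dual}. The only place where the hypothesis is actually used is the rescaling by $1-t_{i}$, which is what converts a purported nontrivial feasible tuple into an outright convex combination of the remaining vertices.
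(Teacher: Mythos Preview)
Your proof is correct and follows exactly the same approach as the paper: the paper's entire argument is the single remark that ``the only way to express $p_i$ as a convex combination of $p_1, \ldots, p_k$ is $p_i = 1 \cdot p_i$,'' and your contradiction argument via rescaling by $1-t_i$ is precisely the justification of that sentence.
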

 Indeed,  the only way to express $p_i$ as a convex
combination of $p_1, ..., p_k$ is $p_i = 1 \cdot p_i$. Thus
(\ref{eq:dualvalue}) holds.

\begin{figure}[!t]
\centering
\includegraphics[width=0.4\textwidth]{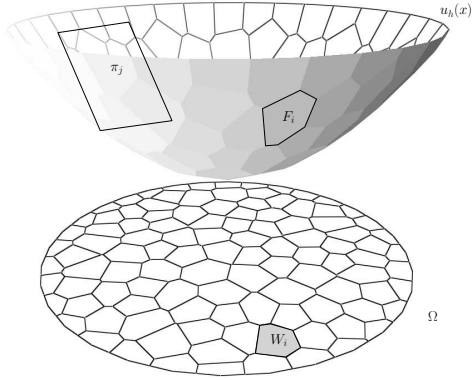}
\caption{PL-convex function and its induced convex subdivision.
\label{fig:PL_convex_function}}
\end{figure}

\subsection{PL convex functions, convex subdivisions and power diagrams}

A PL convex function $f$ defined on a closed convex polyhedron $K$
 produces a convex subdivision (called natural subdivision)
$\mathcal T$ of $K$. It is the same as the power diagram used in
computational geometry. Let us recall briefly the definition (see
for instance \cite{passare}). A \it convex subdivision \rm of $K$
is a collection $\mathcal T$ of  convex polyhedra (called cells)
so that (a) $K =\cup_{ \sigma \in \mathcal T} \sigma$, (b) if
$\sigma, \tau \in \mathcal T$, then $\sigma \cap \tau \in \mathcal
T$, and (c) if $\sigma \in \mathcal T$ and $\tau \subset \sigma$,
then $\tau \in \mathcal T$ if and only if $\tau$ is a face of
$\sigma$.  The collection $\mathcal T$ is determined by its
top-dimensional cells. The set of all zero-dimensional cells in
$\mathcal T$, denoted by $\mathcal T^0$, is called the vertices of
$\mathcal T$.

If $f$ is a PL convex function  defined on a convex polyhedron
$K$, the natural convex subdivision $\mathcal T$ of $K$ associated
to $f$ is the subdivision whose top-dimensional cells in $\mathcal
T$ are the largest convex subsets on which $f$ are linear. The \it
vertices \rm of $f$ are defined to be the vertices of $\mathcal
T$. Suppose $\{v_1, ..., v_m\}$ is the set of all vertices of $f$.
Then $f$ is determined by its vertices $\{v_i\}$ and the values at
the vertices $\{f(v_i)\}$. Indeed the graph of $f$ over $K$ is the
lower boundary of the convex hull $conv((v_1, f(v_1)), ..., (v_,
f(v_m)))$.   Recall that if $P$ is a convex polyhedron in $\bf
R^{n} \times \bf R$, then the \it lower faces \rm of $P$ are those
faces $F$ of $P$ so that if $x \in F$, then $x-(0,..., 0,
\lambda)$ is not in $P$ for all $\lambda > 0$.  The lower boundary
of $P$ is the union of all
 lower faces of $P$.

One can also describe $\mathcal T$ by using the epigraph. The
epigraph  $\{ (x, t) \in K \times \bf R$$| t \geq f(x)\}$ of $f$
is naturally a convex polyhedron. Each cell in $\mathcal T$ is the
vertical projection of a lower face of the epigraph.

Since the dual function $f^*$ is also PL convex on its domain
$D(f^*)$, there is the associated convex subdivision $\mathcal
T^*$ of $D(f^*)$. These two subdivisions $(D(f), \mathcal T)$ and
$(D(f^*), \mathcal T^*)$ are dual to each other in the sense that
there exists a bijective map $\mathcal T \to \mathcal T^*$ denoted
by $\sigma \to \sigma^*$ so that (a) $\sigma, \tau \in \mathcal T$
with $\tau \subset \sigma$ if and only if $\sigma^* \subset
\tau^*$ and (b) if $\tau \subset \sigma$ in $\mathcal T$, then the
$cone(\tau, \sigma)$ is dual to $cone(\sigma^*, \tau^*)$. Here the
cone $cone(\tau, \sigma) =\{ t(x-y) | x \in \sigma, y \in \tau, t
\geq 0\}$ and dual of a cone $C$ is $\{ x \in \bf R^n|$ $y \cdot x
\leq 0$ for all $y \in C$\}. See proposition 1 in section 2 of
Passera and Rullg{\aa}rd~\cite{passare}.

For the PL convex function $u_h(x)$ given by (\ref{eq:PL}), define
the convex polyhedron $W_i = W_i(h) =\{ x \in \bf R^n |$ $ x \cdot
p_i + h_i \geq x \cdot p_j + h_j$ for all $ j$\}. (Note that $W_i$
may be the empty set.) By definition, the convex subdivision
$\mathcal T$ of $\bf R^n$ associated to $u_h$ is the union of all
$W_i$'s and their faces.  Identity (\ref{eq:dual}) for $u^*$ says
that the graph $\{(y, u^*(y)) | y \in conv(p_1, ..., p_k)\}$ of
$u^*$  is the lower boundary of the convex hull $conv((p_1, -h_1),
..., (p_k, -h_l))$.

We summarize the convex subdivisions associated to PL convex
functions as follows,

\begin{proposition}\label{prop:21} (a) If $int(W_i(h)) \neq \emptyset$ and $p_1, ..., p_k$ are distinct, then
 $int(W_i(h)) =\{ x \in
\bf R^n |$$ x \cdot p_i + h_i > \max_{j \neq i}\{ x \cdot p_j +
h_j\}\}$.

(b) If $p_i \notin conv(p_1, ..., p_{i-1}, p_{i+1}, ..., p_k)$,
then $int(W_i(h)) \neq \emptyset$ and $W_i(h)$ is unbounded.

(c)  If $conv( p_1, ..., p_{i-1}, p_{i+1}, ..., p_k)$ is
$n$-dimensional and $p_i \in int(conv( p_1, ..., p_{i-1}, p_{i+1},
..., p_k))$, then  $W_i(h)$ is either bounded or empty.

(d) If $p_1, ..., p_k$ are distinct so that $int(W_i(h)) \neq
\emptyset$ for all $i$, then the top-dimensional cells of
$\mathcal T$ associated to $u_h$ are exactly $\{ W_i (h) |
i=1,...,k\}$, vertices of $u^*$ and the dual subdivision $\mathcal
T^*$ of $conv(p_1, ..., p_k)$ are exactly $\{p_1, ..., p_k\}$.

(e) For any distinct $p_1, ..., p_k \in \bf R^n$, there is $h\in
\bf R^k$ so that $int(W_i(h)) \neq \emptyset$ for all $i$.

\end{proposition}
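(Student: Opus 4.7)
The plan is to verify each of (a)--(e) using elementary convex analysis together with the Legendre--Fenchel duality reviewed in \S2.1. Parts (a)--(c) concern the single polyhedron $W_i(h)$ and follow quickly from separation-type arguments; parts (d)--(e) involve the whole subdivision and the face-duality $\sigma \leftrightarrow \sigma^*$.

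For (a), write $W_i(h)$ as the intersection of the closed half-spaces $\{x : x\cdot(p_i - p_j) \ge h_j - h_i\}$ for $j\neq i$. Distinctness of the $p_j$'s guarantees that each defining hyperplane is proper, so under the nonempty-interior hypothesis the interior is exactly the locus of strict inequalities. For (b), the separating hyperplane theorem applied to $p_i$ and $\mathrm{conv}(p_j)_{j\neq i}$ yields a vector $v$ with $v\cdot p_i > v\cdot p_j$ for all $j\neq i$; then $x = tv$ lies in $\mathrm{int}(W_i(h))$ for $t$ sufficiently large, and $W_i(h)$ is invariant under translation by $sv$ with $s\ge 0$, so it is nonempty and unbounded. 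For (c), the hypothesis $p_i \in \mathrm{int}\,\mathrm{conv}(p_j)_{j\neq i}$ is equivalent to $\max_{j\neq i} v\cdot p_j > v\cdot p_i$ for every nonzero $v$ (use that $p_i + \epsilon v$ lies in the hull for small $\epsilon$). If $W_i(h)$ contained a ray $\{x_0 + tv : t\ge 0\}$, then dividing the defining inequality by $t$ and sending $t\to\infty$ would force $v\cdot p_i \ge v\cdot p_j$ for all $j$, a contradiction; so $W_i(h)$ has trivial recession cone and is bounded or empty.

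For (d), the nonempty-interior hypothesis makes each $W_i(h)$ $n$-dimensional, and the description of $\mathcal T$ as the collection of maximal regions of linearity of $u_h$ identifies the $W_i(h)$'s with the top-dimensional cells of $\mathcal T$. Under the face-duality, top-dimensional cells of $\mathcal T$ correspond to vertices of $\mathcal T^*$; since $\nabla u_h \equiv p_i$ on $\mathrm{int}(W_i(h))$, the dual cell satisfies $W_i(h)^* = \{p_i\}$, so the vertex set of $\mathcal T^*$ is exactly $\{p_1, \dots, p_k\}$. For (e), take $h_i = -\tfrac12 |p_i|^2$; then for $x = p_i$ and any $j\neq i$,
\[
(p_i\cdot p_i + h_i) - (p_i\cdot p_j + h_j) = \tfrac12 |p_i - p_j|^2 > 0,
\]
so $p_i \in \mathrm{int}(W_i(h))$ by part (a).

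The only place where some care is needed is (d), where one has to coordinate the three descriptions of $\mathcal T$---maximal regions of linearity of $u_h$, vertical projection of lower faces of the epigraph, and face-duality with $\mathcal T^*$---to conclude that $W_i(h)^* = \{p_i\}$ under the specific bijection $\sigma \mapsto \sigma^*$. All remaining steps reduce to one-line applications of convex separation, of the definition of the natural subdivision, or of identity~(\ref{eq:dual}).
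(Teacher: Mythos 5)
Your proof is correct, and in two of the five parts it takes a genuinely different route from the paper. For (a) you invoke the standard polyhedron fact: since $p_j - p_i \neq 0$ by distinctness, an interior point with an active constraint $x\cdot(p_i-p_j) = h_j - h_i$ is ruled out by perturbing in the direction $p_j - p_i$, so the interior is the strict-inequality locus; the paper instead argues along a line segment through the interior point using linearity of the functions $L_j(x) = x\cdot p_j + h_j$ --- equivalent arguments at the same level of rigor. For (b) you use strict separation of $p_i$ from the compact convex hull of the remaining points to get $v$ with $v\cdot p_i > v\cdot p_j$ for all $j\neq i$, then place $tv$ in the interior for large $t$ and observe invariance of $W_i(h)$ under translation by $sv$, $s\geq 0$; the paper derives both conclusions from the dual identity (\ref{eq:dual}), namely that $u^*_{h,P'}(p_i)=\infty$ for $P' = \{p_1,\ldots,p_k\}\setminus\{p_i\}$. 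Your route is more elementary and self-contained; the paper's fits its Legendre-duality theme and reuses machinery needed elsewhere. Part (c) is essentially the paper's ray/recession-cone argument, merely phrased via $p_i + \epsilon v$ lying in the hull instead of the paper's projection of the points to the line $\{tv \mid t \in \mathbf{R}\}$, and in (d) both you and the paper ultimately defer to the Passare--Rullg{\aa}rd duality theorem, your identification $W_i(h)^* = \{p_i\}$ being exactly what that proposition supplies. The most substantial divergence is (e): the paper runs an induction --- relabel the points by a generic linear projection so that each $p_{i+1}$ lies outside $conv(p_1,\ldots,p_i)$, then choose $h_{i+1}$ sufficiently negative to preserve all earlier cells --- whereas you simply set $h_i = -\frac{1}{2}|p_i|^2$ and verify $(p_i\cdot p_i + h_i) - (p_i\cdot p_j + h_j) = \frac{1}{2}|p_i - p_j|^2 > 0$, so $p_i$ lies in the strict locus and hence in $\mathrm{int}(W_i(h))$ (only the trivial inclusion of the strict locus in the interior is needed, so there is no circularity with (a)). Your choice is precisely the zero-weight power diagram, i.e., the Voronoi diagram of the $p_i$ (cf.\ Proposition~\ref{prop:powerdiag}), in which each site lies in the interior of its own cell; this is shorter, fully explicit, and even furnishes a canonical initial point $h \in H$ for the Newton-type iteration on $E$ that the paper advocates, while the paper's inductive construction is longer but illustrates how $H$ can be reached by adjusting one height coordinate at a time.
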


\begin{proof}
 To see (a), by definition $\{ x \in \bf R^n |$$ x
\cdot p_i + h_i > \max_{j \neq i}\{ x \cdot p_j + h_j\}\}$ is open
and is in $W_i(h)$. Hence it is included in the interior
$int(W_i(h))$. Let $L_j(x) = x \cdot p_j +h_j$.  By definition,
$L_i(x) \geq L_j(x)$ for all $x \in W_i(h)$. It remains to show
for each $p \in int(W_i)$, $L_i(p)
> L_j(p)$ for $j \neq i$.  Take a point $q \in int(W_i(h))$ so
that $L_i(q)> L_j(q)$ (this is possible since $L_i \neq L_j$ for
$j \neq i$). Choose a line segment  $I$ from $q$ to $r$ in
$int(W_i(h))$ so that $p \in int(I)$. Then using $L_i(q)>L_j(q),
L_i(r) \geq L_j(r)$, $p = tq+(1-t)r$ for some $t \in (0,1)$ and
linearity, we see that $L_i(p)> L_j(p)$. This establishes (a).

To see part (b) that $int(W_i) \neq \emptyset$,  by the identity
(\ref{eq:dual}) for $u^*_{h, P'}$ with $P'=\{p_1, ..., p_k\}
-\{p_i\}$, we have $u^*_{h, P'}(p_i) =\infty$.  But $u^*_{h,
P'}(p_i) =\sup\{ x \cdot p_i -u_{h, P'}(x) | x \in \bf R^n\}$.
Hence there exists $x$ so that $x \cdot p_i + h_i > u_{h, P'}(x) =
\max_{j \neq i} (x \cdot p_j + h_j)$, i.e., $int(W_i(h)) \neq
\emptyset$. Furthermore, $u^*_{h,P'}(p_i)=\infty$ implies $W_i(h)$
is non-compact, i.e., unbounded.

To see (c),  suppose otherwise, then the set $W_i(h)$
 contains a ray $\{ t v + a | t \geq 0\}$ for some non-zero vector
 $v$. Therefore, $(t v +a) \cdot p_i + h_i \geq (tv+a) \cdot p_j +
 h_j$ for all $j \neq i$. Divide the inequality by $t$ and let $t
 \to \infty$, we obtain $v \cdot p_i \geq v \cdot p_j$ for all $j
 \neq i$. This shows that the projection of $p_i$ to the line $\{ t v | t \in \bf
 R\}$ is not in the interior of the convex hull of the
 projections of $\{p_1, ..., p_k\} -\{p_i\}$. This contradicts the
 assumption that $p_i$ is in the interior of the n-dimensional convex hull.

The first part of (d) follows from the definition. The duality
theorem (proposition 1 in section 2 of  ~\cite{passare}) shows the
second part.

To see part (e), let us relabel the set $p_1, ..., p_k$ so that
for all $i$ if $j>i$ then $p_j$ is not in the convex hull of
$\{p_1, ..., p_i\}$.
 This is always possible due to the assumption that $p_1, ...,
p_k$ are distinct. Indeed, choose a line $L$ so that the
orthogonal projection of $p_i$'s to $L$ are distinct. Now relabel
these points according to the linear order of the projections to
$L$.

For this choice of ordering of $p_1, ..., p_k$, we construct $h_1,
..., h_k$ inductively so that $W_i(h)$ contains a non-empty open
set. Let $h_1=0$, since $p_2 \neq p_1$, for any choice of $h_2$,
both $vol(\{ x |\bigtriangledown u_{(h_1, h_2)}(x) =p_i\})>0$ for
$i=1,2$. Inductively, suppose $h_1, ..., h_i$ have been
constructed so that $vol(\{ x | \bigtriangledown u_{(h_1, ...,
h_i)}(x) = p_j\})>0$ for all $j=1,2,..., i$.  To construct
$h_{i+1}$, first note that since $p_{i+1}$ is not in the convex
hull of $p_1, ..., p_i$, by part (a), for any choice of $h_{i+1}$,
$vol(W_{i+1}(h_1, ..., h_{i+1})) >0 $ and $W_{i+1}(h_1, ...,
h_{i+1})$ is unbounded. Now by choosing $h_{i+1}$ very negative,
we can make all $vol(W_j(h_1, ..., h_{i+1}))>0$ for all
$j=1,2,..., i+1$.
\end{proof}

It is known that convex subdivisions associated to a PL convex
function $u_h(x)$ on $\bf R^n$ are exactly the same as the power
diagrams. See for instance \cite{Amgous}, \cite{Siada}. We recall
briefly the power diagrams. Suppose $P=\{ p_1, ..., p_k\}$ is a
set of $k$ points in $\bf R^n$ and $w_1, ..., w_k$ are $k$ real
numbers. The power diagram for the weighted points $\{ (p_1, w_1),
..., (p_k, w_k)\}$ is the convex subdivision $\mathcal T$ defined
as follows. The top-dimensional cells are $U_i=\{ x \in \bf R^n
$$| |x-p_i|^2 + w_i \leq |x-p_j|^2 + w_j$ for all $j$\}. Here
$|x|^2 = x \cdot x$ is the square of the Euclidean norm and
$|x-p_i|^2+w_i$ is the \it power distance \rm from $x$ to $(p_i,
w_i)$. If all weights are zero, then $\mathcal T$ is the Delaunay
decomposition associated to $P$. Since $ |x-p_i|^2 + w_i \leq
|x-p_j|^2 + w_j$ is the same as $x \cdot x -2 x \cdot p_i +
|p_i|^2 + w_i \leq x \cdot x -2 x \cdot p_j + |p_j|^2 + w_j$ which
is the same as $ x \cdot p_i -\frac{1}{2}( |p_i|^2 + w_i) \geq x
\cdot p_j -\frac{1}{2}( |p_j|^2 + w_j)$. We see that $U_i =\{ x
\in \bf R^n |$$ x \cdot p_i + h_i \geq x \cdot p_j +h_j$ for all
$j$\} where $h_i = -\frac{1}{2}(|p_i|^2+w_i)$. This shows the well known
fact that, 

\begin{proposition} The power diagram associated to $\{(p_i,
w_i)| i=1, ..., k\}$ is the convex subdivision associated to the
PL convex function $u_h$ defined by (\ref{eq:PL}) where $h_i =
-\frac{ |p_i|^2+w_i}{2}$.
\label{prop:powerdiag}
\end{proposition}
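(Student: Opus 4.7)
The proof is essentially a direct algebraic identification of the cells of the two subdivisions, and the paragraph preceding the proposition already sketches the key computation. My plan is to make this rigorous in three short steps.

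First, I would start from the defining inequalities of the power-diagram cells $U_i = \{x \in \mathbb{R}^n \mid |x-p_i|^2 + w_i \le |x-p_j|^2 + w_j \text{ for all } j\}$. Expanding $|x-p_i|^2 = |x|^2 - 2x\cdot p_i + |p_i|^2$ and cancelling the common $|x|^2$ term on both sides, the defining inequality becomes $-2x\cdot p_i + |p_i|^2 + w_i \le -2x\cdot p_j + |p_j|^2 + w_j$. Dividing by $-2$ (which reverses the inequality) and setting $h_i = -\tfrac{1}{2}(|p_i|^2 + w_i)$, this rewrites as $x\cdot p_i + h_i \ge x\cdot p_j + h_j$ for all $j$, which is exactly the defining inequality of the top-dimensional cell $W_i(h)$ of the convex subdivision associated to $u_h$ in~\eqref{eq:PL}.

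Second, I would observe that the identification $U_i = W_i(h)$ for every $i$ means that the two subdivisions have the same collection of top-dimensional cells. Since a convex subdivision $\mathcal{T}$ of $\mathbb{R}^n$ is determined by its top-dimensional cells (every lower-dimensional cell is, by condition (c) in the definition of convex subdivision recalled in \S2.2, a face of some top-dimensional cell, and the intersection condition (b) forces these faces to agree on both sides), the power diagram and the natural subdivision associated to $u_h$ coincide as cell complexes.

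I do not anticipate any real obstacle: the statement is a well-known reformulation of the power diagram as an upper envelope of affine functions, and the entire content reduces to the algebraic manipulation in the first step together with the standard fact that a convex subdivision is determined by its maximal cells. The only minor point worth being explicit about is that the choice $h_i = -\tfrac{1}{2}(|p_i|^2 + w_i)$ is forced by matching coefficients of $x\cdot p_i$ and the additive constant simultaneously, which explains the precise formula in the statement.
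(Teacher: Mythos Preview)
Your proposal is correct and follows exactly the paper's approach: the paper's entire argument is the algebraic manipulation in the paragraph immediately preceding the proposition (expanding $|x-p_i|^2$, cancelling $|x|^2$, and identifying $U_i = W_i(h)$ with $h_i = -\tfrac{1}{2}(|p_i|^2+w_i)$), and the proposition is then stated as an immediate consequence. Your second step about top-dimensional cells determining the subdivision is a welcome bit of extra care, but the paper treats this as understood.
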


\begin{figure}[!t]
\centering
\includegraphics[width=0.5\textwidth]{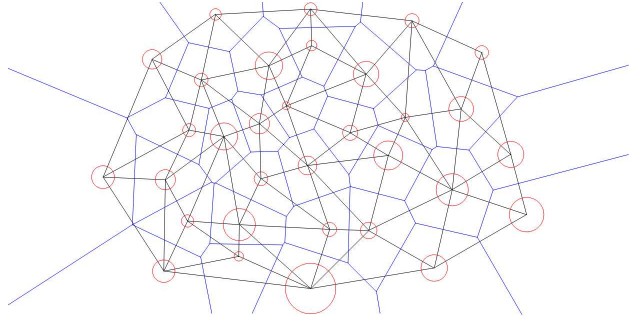}
\caption{Power diagram and its dual weighted Delaunay triangulation.
\label{fig:power_diagram}}
\end{figure}

\subsection{Variation of the volume of top-dimensional cells}

The following is the key technical proposition for us to establish
variational principles.

\begin{proposition} \label{prop:14} Suppose $\sigma: \Omega \to \bf R$ is
continuous defined on a compact convex domain $\Omega \subset \bf
R^n$. If $p_1, ..., p_k \in \bf R^n$ are distinct and $h \in \bf
R^k$ so that $vol( W_i(h) \cap \Omega) >0$ for all $i$, then
$w_i(h) =\int_{W_i(h) \cap \Omega} \sigma (x) dx$ is a
differentiable function in $h$ so that  for $j \neq i$ and $W_i(h)
\cap \Omega$ and $W_j(h) \cap \Omega$ share a codimension-1 face
$F$, 
\begin{equation}
\label{eq:hessian} 
\frac{\partial w_i(h)}{\partial h_j} =-\frac{1}{|p_i -p_j|} \int_{F} \sigma|_F(x)dA 
\end{equation} 
where $dA$ is the area form on $F$ and partial
derivative is zero otherwise.  In particular,
$$\frac{\partial w_i(h)}{\partial h_j} =\frac{\partial
w_j(h)}{\partial h_i}.$$

\end{proposition}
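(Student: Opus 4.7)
The plan is to exploit the fact that changing only $h_j$ moves precisely one bounding hyperplane of $W_i(h)$, namely the one separating $W_i$ from $W_j$, while leaving the other bounding hyperplanes and $\Omega$ untouched. Writing $W_i(h)$ as the intersection of half-spaces $\{x : x\cdot(p_i-p_j) \geq h_j-h_i\}$ over $j\neq i$, I note that the face shared with $W_j$ lies in the hyperplane $H_{ij}(h)=\{x:x\cdot(p_i-p_j)=h_j-h_i\}$, whose unit normal is $(p_i-p_j)/|p_i-p_j|$. When $h_j$ is replaced by $h_j+\delta$, $H_{ij}$ translates parallel to itself by perpendicular distance $\delta/|p_i-p_j|$ in the direction $-(p_i-p_j)/|p_i-p_j|$, i.e.\ into $W_i$, so $W_i$ shrinks for $\delta>0$ and grows for $\delta<0$.

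Assume first that $W_i(h)\cap\Omega$ and $W_j(h)\cap\Omega$ share the codimension-one face $F$. Because $\text{vol}(W_i(h)\cap\Omega)>0$ and $F$ is relatively open in $H_{ij}(h)$ away from a lower-dimensional set, for $|\delta|$ small the combinatorial structure of $W_i(h+\delta e_j)\cap\Omega$ is the same as that of $W_i(h)\cap\Omega$ except that the face $F$ has been translated by $\delta/|p_i-p_j|$ along its normal. Hence $(W_i(h)\setminus W_i(h+\delta e_j))\cap\Omega$ is a slab $S_\delta$ lying over $F\cap\Omega$ of normal thickness $\delta/|p_i-p_j|$, up to contributions from a neighborhood of $\partial F$ whose volume is $o(\delta)$. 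Introducing an orthonormal coordinate system $(t,y)$ on $\mathbb{R}^n$ with $t$ along the normal to $H_{ij}(h)$ and $y$ parametrizing $F$, Fubini gives
\begin{equation*}
w_i(h)-w_i(h+\delta e_j) \;=\; \int_{F\cap\Omega}\!\int_0^{\delta/|p_i-p_j|} \sigma(y+t\nu)\,dt\,dA(y)+o(\delta).
\end{equation*}
Dividing by $\delta$ and letting $\delta\to 0$, the inner integral converges to $\sigma(y)/|p_i-p_j|$ uniformly in $y$ by uniform continuity of $\sigma$ on the compact set $\Omega$, which yields the claimed formula for $\partial w_i/\partial h_j$. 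The case $\delta<0$ is handled symmetrically, showing the partial derivative exists two-sidedly.

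If $W_i(h)\cap\Omega$ and $W_j(h)\cap\Omega$ do not share a codimension-one face, then their intersection has dimension at most $n-2$; for $|\delta|$ small the symmetric difference $W_i(h)\triangle W_i(h+\delta e_j)$ is contained in an $O(\delta)$-tubular neighborhood of this lower-dimensional set inside $\Omega$, so it has $n$-volume $o(\delta)$ and the partial derivative vanishes. The symmetry $\partial w_i/\partial h_j=\partial w_j/\partial h_i$ is now evident because both expressions equal the same integral of $\sigma$ over the common face $F=F_{ij}=F_{ji}$, divided by the same $|p_i-p_j|$. Continuity of the partial derivatives in $h$ (which upgrades existence of partials to differentiability) follows from the fact that $F$ depends continuously on $h$ in Hausdorff distance while $\sigma$ is continuous, so the integral $\int_F \sigma|_F\,dA$ depends continuously on $h$.

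The main technical obstacle is controlling the $o(\delta)$ error near $\partial F$, where the slab may interact with neighboring faces $W_\ell$ ($\ell\neq i,j$) or with $\partial\Omega$. This is handled by noting that $\partial F$ lies in a finite union of lower-dimensional affine subspaces (the other bounding hyperplanes of $W_i$ and the faces of $\Omega$), so an $O(\delta)$-neighborhood of $\partial F$ has $n$-volume $O(\delta^2)$, which is absorbed into the $o(\delta)$ remainder; continuity of $\sigma$ then guarantees the bulk slab contribution dominates in the limit.
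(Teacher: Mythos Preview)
Your argument is correct and follows the same geometric idea as the paper: the symmetric difference $W_i(h)\triangle W_i(h+\delta e_j)$ is a thin slab (the paper calls it a \emph{cap domain}) of normal thickness $\delta/|p_i-p_j|$ sitting over the shared face $F$, and the boundary contributions are lower order. The paper packages the boundary control differently: it proves an abstract lemma (Lemma~\ref{lemma:2}) computing the one-sided derivatives of $t\mapsto\int_X\int_0^{\min(t,f(x))}\tau(x,s)\,ds\,dx$ via dominated convergence, and then observes that the cap over $F$ has exactly this form with $f$ a concave function whose level sets have $(n-1)$-measure zero. Your direct $O(\delta^2)$ estimate on a tube around $\partial F$ achieves the same end more geometrically; the paper's lemma is a bit more robust and avoids having to argue separately about how the cross-sections $F_t$ vary.

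One small point to tighten: you justify the $O(\delta^2)$ error by saying that $\partial F$ lies in a finite union of lower-dimensional \emph{affine} subspaces coming from ``the faces of $\Omega$''. The proposition only assumes $\Omega$ is a compact convex domain, not a polytope, so $\partial\Omega$ may be curved and $\partial F$ need not be piecewise affine. The conclusion survives, since $\partial F$ is still an $(n-2)$-dimensional compact set (boundary of a convex body inside the hyperplane $H_{ij}$) and its $O(\delta)$-tube still has $(n-1)$-measure $O(\delta)$; but your stated reason should be adjusted. The paper's dominated-convergence argument sidesteps this issue entirely, which is one concrete advantage of its formulation.
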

\begin{proof}

The proof is based on the following simple lemma.

\begin{lemma} \label{lemma:2} Suppose $X$ is a compact domain in $\bf R^n$,
$f: X \to \bf R$ is a non-negative continuous function and
$\tau(x,t):\{ (x,t) \in X \times \bf R |$$ 0 \leq t \leq f(x)\}
\to \bf R$ is continuous. For each $t \geq 0$, let $f_t(x)
=\min\{t, f(x)\}$. Then $W(t) =\int_X (\int_0^{f_t(x)} \tau(x,s)
ds) dx$ satisfies
\begin{equation}\label{eq:12} \lim_{ t \to t^+_0} \frac{ W(t) -W(t_0)}{t
-t_0} = \int_{\{ x | f(x) > t_0\}} \tau(x, t_0) dx \end{equation}
and \begin{equation} \label{eq:13} \lim_{ t \to t^-_0} \frac{ W(t)
-W(t_0)}{t -t_0}  = \int_{\{ x | f(x) \geq t_0\}} \tau(x, t_0) dx.
\end{equation}
It is  differentiable at $t_0$ if and only if $\int_{\{x \in X |
f(x)=t_0\}} \tau(x, t_0) dx =0$.
\end{lemma}

\begin{figure}[!t]
\centering
\includegraphics[width=0.7\textwidth]{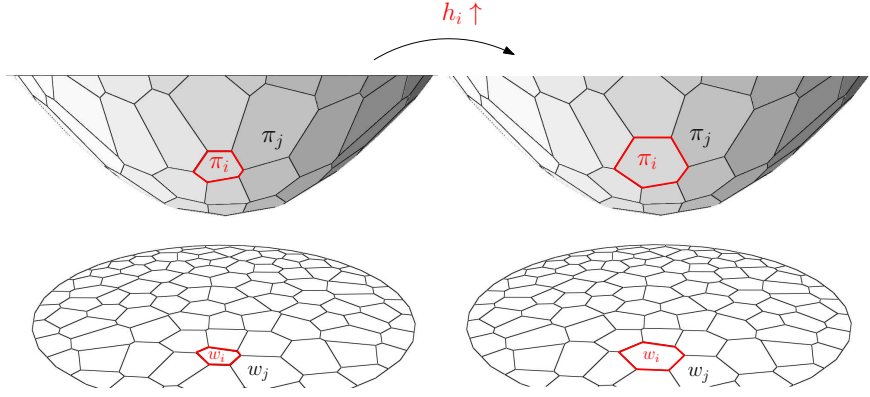}
\caption{Variation of the volume of top-dimensional cells¡£
\label{fig:variation_1}}
\end{figure}

 The conditions in the lemma can be relaxed.

\begin{proof}  Let $G_t(x) =\int_0^{f_t(x)} \tau(x, s) ds$  and $M$ be an upper
bound of $|\tau(x, t)|$ in its domain. Since $|\min(a,b)
-\min(a,c)| \leq |b-c|$, we have $|f_t(x) - f_{t'}(x)| \leq
|t-t'|$. Now, for any $t \neq t'$,

\begin{equation}\label{eq:123}
|\frac{ G_t(x)-G_{t'}(x)}{t-t'} |=\frac{1}{|t- t'|}
|\int_{f_{t'}(x)}^{f_t(x)} \sigma(x,s) ds|  \leq \frac{M}{|t-t'|}
| f_t(x) -f_{t'}(x)| \leq M.
\end{equation}

Fix $t_0$ and $x \in X$. If $f(x) < t_0$, then for $t$ very close
to $t_0$, $G_t(x) = \int^{f(x)}_0 \tau(x, s) ds$. Hence $ \lim_{t
\to t_0} \frac{ G_t(x)-G_{t_0}(x)}{t-t_0} =0$. If $f(x)
> t_0$, then for $t$ very close to $t_0$, $G_t(x) = \int^{t}_0
\tau(x, s) ds$.  Hence $ \lim_{t \to t_0} \frac{
G_t(x)-G_{t_0}(x)}{t-t_0} = \lim_{t \to t_0} \frac{1}{t-t_0}
\int_{t_0}^t  \tau(x, s) ds =\tau(x, t_0)$. If $f(x) = t_0$, then
the above calculations shows $\lim_{t \to t_0^+} \frac{
G_t(x)-G_{t_0}(x)}{t-t_0} =0$ and $ \lim_{t \to t_0^-} \frac{
G_t(x)-G_{t_0}(x)}{t-t_0} = \tau(x, t_0)$. Therefore, by Lebesgue
dominated convergence theorem, we have
\begin{equation} \lim_{ t \to t^+_0} \frac{ W(t) -W(t_0)}{t
-t_0} =\lim_{ t \to t_0^+} \int_{X} \frac{ G_t(x)
-G_{t_0}(x)}{t-t_0}  dx = \int_{\{ x | f(x) > t_0\}} \tau(x, t_0)
dx
\end{equation} and
\begin{equation} \lim_{ t \to t^-_0} \frac{
W(t) -W(t_0)}{t -t_0} =\lim_{ t \to t_0^-} \int_{X} \frac{ G_t(x)
-G_{t_0}(x)}{t-t_0}  dx = \int_{\{ x | f(x) \geq t_0\}} \tau(x,
t_0) dx \end{equation}
 This establishes the lemma.

\end{proof}

Fix $a < b$, we call the domain $\{(x, t) \in X \times \bf R$$|  a
\leq f(x), a \leq t \leq \min(f(x),b) \}$ a \it cap domain \rm
with base $\{x | f(x) \geq a\}$ and top $\{ x | f(x) \geq b\}$ of
\it height \rm $(b-a)$ associated to the function $f$.

To prove the proposition \ref{prop:14}, let $h'=(h_1, ...,
h_{i-1}, h_i -\delta, h_{i+1}, ..., h_k)$. For small positive
$\delta >0$, by definition, $W_i(h') \subset W_i(h)$ and $W_j(h)
\subset W_j(h')$. If $W_i(h) \cap W_j(h) \cap \Omega =\emptyset$,
then $W_j(h)\cap \Omega = W_j(h') \cap \Omega$ for small $\delta$.
Hence $\frac{\partial W_j(h)}{\partial h_i}=0$. If $W_i(h) \cap
\Omega$ and $W_j(h) \cap \Omega$ share a codimension-1 face $F$,
then the closure $cl(W_j(h') -W_j(h))$ is a cap domain with base
$F$ associated to a convex function $f$ defined on $F$. The height
of the cap domain is $\frac{1}{|p_i -p_j|} \delta$ and $f$ is PL
convex so that the $(n-1)$-dimensional Lebesgue measure of set of
the form $\{ x \in F | f(x) =t\}$ is zero. Furthermore for
$\delta>0$, by definition
$$ \frac{ w_j(h') -w_j(h)}{\delta} = \frac{1}{\delta}
\int_{W_j(h')\cap \Omega -W_j(h) \cap \Omega} \sigma(x) dx$$
$$=\frac{1}{\delta} \int_F \int_{0}^{f_t(y)} \tau(y, s) ds dy$$
where $y \in F$ is the Euclidean coordinate and $\tau(y,s)$ is
$\sigma$ expressed in the new coordinate. Thus, by lemma
\ref{lemma:2}, we see \begin{equation}\label{eq:17}\lim_{\delta
\to 0^+} \frac{ w_j(h') -w_j(h)}{\delta} =\int_F \sigma|_F dA.
\end{equation} The same calculation shows that for $\delta <0$ and close to
0, using the fact that $cl(W_j(h)\cap \Omega -W_j(h')\cap \Omega)$
is cap with top $F$, we see that (\ref{eq:17}) holds as well.
Finally, if $W_i(h) \cap \Omega$ and $W_j(h)\cap \Omega$ share a
face of dimension at most $n-2$, then the same calculation still
works where the associate cap domain has either zero top area or
zero bottom area. Thus the result holds.

\end{proof}

\section{A proof of Theorem~\ref{thm:otp}}

Our proof is divided into several steps. In the first step, we
show that the set $H=\{ h \in \bf R^k$$| vol(W_i(h) \cap \Omega)
>0$ for all $i$\} is a non-empty open convex set. In the second step, we
show that $E(h) =\int_{\Omega} u_h(x) \sigma(x) dx$ is a
$C^1$-smooth convex function on $H$ so that $\frac{\partial
E(h)}{\partial h_i} =\int_{W_i(h) \cap \Omega} \sigma(x) dx$. In
the third step, we show that $E(h)$ is strictly convex in $H_0= H
\cap \{ h | \sum_{i=1}^k h_i =0\}$. In the fourth step, we show
that the gradient map $\bigtriangledown E: H_0 \to W =\{ (A_1,
..., A_k) | A_i>0, \sum_{i=1}^k A_i = \int_{\Omega} \sigma(x)
dx$\} is a diffeomorphism. Finally, for the completeness, 
we include a  simple proof by Aurenhammer et al. 
(Lemma 1 in \cite{aure1}) to show that $\nabla u_b$ is an optimal 
transport map minimizing the quadratic cost. 

\subsection{Convexity of the domain $H$}

We begin with a simple observation that a compact convex set $X
\subset \bf R^n$ has positive volume if and only if $X$ contains a
non-empty open set, i.e., $X$ is n-dimensional. Therefore,
$vol(W_i(h) \cap \Omega)
>0$ is the same as $W_i(h) \cap \Omega$ contains a non-empty open
set in $\bf R^n$. The last condition,  by the above proposition
\ref{prop:21}(a), is the same as there exists $x \in \Omega$ so
that $x \cdot p_i > \max_{j \neq i}\{ x \cdot p_j + h_j\}.$

Now to see that $H$ is convex, since $H =\cap_{i=1}^k H_i$ where
$H_i=\{ h \in \bf R^k$$ | vol(W_i(h) \cap \Omega) >0\}$, it
suffices to show that $H_i$ is convex for each $i$. To this end,
take $\alpha, \beta \in H_i$ and $t \in (0, 1)$. Then there exist
two vectors $v_1, v_2 \in \Omega$ so that $ v_1 \cdot p_i +
\alpha_i > v_1 \cdot p_j + \alpha_j$ and $v_2 \cdot p_i+\beta_i >
v_2 \cdot p_j + \beta_j$ for all $j \neq i$. Therefore,
$(tv_1+(1-t)v_2)\cdot p_i + (t \alpha_i + (1-t) \beta_i) > (tv
_1+(1-t)v_2) \cdot p_j + (t\alpha_j + (1-t)\beta_j)$ for all $j
\neq i$. This shows that $t\alpha+(1-t)\beta$ is in $H_i$.
Furthermore, each $H_i$ is non-empty. Indeed, given $h_1, ...,
h_{i-1}, h_{i+1}, ..., h_k$, by taking $h_i$ very large, we see
that $h=(h_1,..., h_k)$ is in $H_i$.  Also, from the definition,
$H_i$ is an open set. Therefore, to show $H$ is an open convex
set, it remains to show that $H$ is non-empty.

To see $H \neq \emptyset$, it suffices to show that there exists
$h$ so that $vol(W_i(h)) >0$ (which could be $\infty$) for all
$i$. Indeed, after some translation, we may assume that $0$ is in
the interior of $\Omega$. Due to the fact that $u_{\lambda h}(x)
=\lambda u_h(x/\lambda)$ and $W_i(\lambda h)=W_i(h)/\lambda$,  if
$vol(W_i(h))>0$ for all $i$, then for $\lambda >0$ large,
$vol(W_i(\lambda h) \cap \Omega)>0$. Now by proposition
\ref{prop:21}(e), we can find $h$ so that $vol(W_i(h))>0$ for all
$i$. Thus $H \neq \emptyset$.

\subsection{Convexity of the function $E(h)$ and its gradient}
We show that $E(h) = \int_{\Omega} u_h(x) \sigma(x) dx$ is convex in $h$ 
and satisfies $\partial E(h)/\partial h_i = \int_{W_i(h)\cap \Omega} \sigma(x) dx$. 
Since $u_h(x)$ is the maximum of a collection of functions $x
\cdot p_i + h_i$ which are linear in $(x, h) \in \bf R^n \times
\bf R^k$, $u_h(x)$ is convex in $(x, h) \in \bf R^n \times \bf
R^k$. Thus by the assumption that $\sigma(x) \geq 0$,
$ \int_{\Omega} u_h(x) \sigma(x) dx$ is convex in $h$.
For a convex function $f$, the directional derivative of $f$ at
the point $\alpha$ in direction $d$, denoted by $f'(\alpha, d)
=\lim_{ t \to 0^+} \frac{f(\alpha + t d) -f(\alpha)}{t}$, always
exists in $(0, \infty]$. Furthermore, it is known (proposition
2.3.2 in \cite{brow}) that if  $f_i$ are convex and $f(x)
=\max_{i=1}^k\{ f_i(x)\}$, then $f'(\alpha, d) = \max_{ i \in
K_{\alpha}} \{ f'_i(\alpha, d)\}$ where $K_{\alpha}=\{ j |
f_j(\alpha) =f(\alpha)\}$.  In our case, $x \cdot p_i + h_i$ is
smooth so that $\frac{\partial (x \cdot p_i + h_i)}{\partial h_j}
=\delta_{ij}$. Therefore, we obtain
$$\frac{\partial u_h(x)}{\partial h_j} =\max_{ i \in K_{x, h}} \{
\delta_{ij}\}$$ where $K_{x, h} =\{ m | x\cdot p_m + h_m
=u_h(x)\}$. By proposition \ref{prop:21}(a), if $x \in
int(W_j(h))$, then $\frac{\partial u_h(x)}{\partial h_j} =1$ and
if $x \in int(W_i(h))$  for $i \neq j$, then $\frac{\partial
u_h(x)}{\partial h_j}=0$. In particular, this shows $|u_h(x)
-u_{h'}(x)| \leq |h -h'|$.

By the dominant convergence theorem in real analysis and
continuity of $\sigma(x)$, we obtain
$$ \frac{\partial E(h)}{\partial h_i} = \int_{\Omega}
\frac{\partial u_h(x)}{\partial h_i} \sigma(x) dx =\int_{W_i(h)
\cap \Omega} \sigma(x) dx.$$

This shows that $E(h) =\int^h \sum_i \int_{W_i(h)\cap \Omega} \sigma(x)dx dh_i +c$ for some constant $c$.

\subsection{The strict convexity of $E(h)$}

We will show that the Hessian matrix of $E(h)$ has a 1-dimensional
null space spanned by the vector $(1, 1, ..., 1)$. Let $w_i(h)
=\int_{W_i(h) \cap \Omega} \sigma(x) dx$. By the calculation
above, $\partial E(h)/\partial h_i =w_i(h)$ and $\sum_{i=1}^k
w_i(h) =\int_{\Omega} \sigma(x) da$. By proposition \ref{prop:14},
for $i \neq j$, $\frac{\partial W_i(h)}{\partial h_j} \leq 0$.
Furthermore, if $W_i(h)$ and $W_j(h)$ share a codimension-1 face
$F$ in $\Omega$, then $\frac{\partial W_i(h)}{\partial h_j} =
-\frac{1}{|p_i-p_j|} \int_{F} \sigma|_F(x) dA <0$ where
$dA$ is the area form on the codimension-1 face $F$. This implies,

\begin{corollary}\label{coro:21}  The Hessian matrix $Hess(E)$ of $E(h)$ is
positive semi-definite with 1-dimensional null space generated by
$(1,1,.., 1)$.  In particular, $E|_{H_0}: H_0=\{h \in H |
\sum_{i=1}^k h_i =0\} \to \bf R$ is strictly convex.
\end{corollary}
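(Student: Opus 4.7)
The plan is to recognise the Hessian of $E$ as the combinatorial Laplacian of a weighted graph built from the cell structure, and then to exploit convexity of $\Omega$ to show the graph is connected, which forces the null space to be exactly one-dimensional.

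First I would assemble the entries of $\operatorname{Hess}(E)$. Since $\partial E/\partial h_i = w_i(h)$, the Hessian is the Jacobian $H_{ij} = \partial w_i/\partial h_j$. Proposition~\ref{prop:14} gives $H_{ij} = -\frac{1}{|p_i-p_j|}\int_{F_{ij}} \sigma|_{F_{ij}}\,dA \leq 0$ for $i\neq j$, with strict inequality precisely when the two cells $W_i(h)\cap\Omega$ and $W_j(h)\cap\Omega$ share a codimension-one face $F_{ij}$ of positive $(n-1)$-measure. The diagonal entries come from differentiating the identity $\sum_{i=1}^k w_i(h) = \int_\Omega \sigma(x)\,dx$, which is constant on $H$: this gives $\sum_i H_{ij}=0$, hence $H_{jj} = -\sum_{i\neq j} H_{ij} \geq 0$. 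So $\operatorname{Hess}(E)$ is a symmetric matrix with nonpositive off-diagonals and zero row sums, i.e.\ the weighted graph Laplacian $L(G)$ of the graph $G$ on vertex set $\{1,\dots,k\}$ whose edge $\{i,j\}$ carries weight $c_{ij}=\frac{1}{|p_i-p_j|}\int_{F_{ij}}\sigma|_{F_{ij}}\,dA > 0$ exactly when $F_{ij}$ is $(n-1)$-dimensional. Standard Laplacian theory then gives $L(G)\succeq 0$ and $\dim\ker L(G)$ equal to the number of connected components of $G$; and $(1,\dots,1)$ always lies in the kernel.

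The main step is therefore to show that $G$ is connected. For any two indices $i,j$, since $h\in H$ both $W_i(h)\cap\Omega$ and $W_j(h)\cap\Omega$ have nonempty interior, so I would pick interior points $x_i, x_j$ and join them by the straight segment $\gamma(t)=(1-t)x_i+tx_j$, which stays in $\Omega$ by convexity. The collection $\{W_\ell(h)\cap\Omega\}_{\ell=1}^k$ is a convex cell decomposition of $\Omega$; its $(n-2)$-skeleton is a finite union of convex sets of dimension $\leq n-2$. By a small generic perturbation of $\gamma$ inside $\Omega$ (still joining $x_i$ to $x_j$), I can arrange that the perturbed path meets this $(n-2)$-skeleton only in a set of dimension $\leq 0$ if at all, and in particular crosses from one top-dimensional cell to the next only through the relative interior of a codimension-one face. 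The path therefore visits a finite sequence of top cells $W_{i_0}=W_i, W_{i_1},\dots,W_{i_s}=W_j$, with consecutive cells sharing a codimension-one face in $\Omega$; this produces an edge-path in $G$ from $i$ to $j$.

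Once connectedness of $G$ is in hand, $\dim\ker\operatorname{Hess}(E) = 1$ with $\ker\operatorname{Hess}(E)=\mathbb{R}(1,\dots,1)$. The affine subspace $H_0 = H\cap\{\sum h_i=0\}$ has tangent space transverse to $(1,\dots,1)$, so the restriction of $\operatorname{Hess}(E)$ to $H_0$ is positive definite everywhere on $H_0$, giving strict convexity of $E|_{H_0}$. The only nontrivial obstacle in the plan is the geometric connectedness argument for the dual graph of the cell decomposition; the Laplacian algebra and the row-sum identity are immediate from what has already been proved.
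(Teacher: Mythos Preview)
Your proposal is correct and follows essentially the same route as the paper: both compute the off-diagonal Hessian entries from Proposition~\ref{prop:14}, derive the zero row sums from $\sum_i w_i(h)=\int_\Omega\sigma$, and then reduce the one-dimensionality of the kernel to the connectedness of the adjacency graph of the cells $W_i(h)\cap\Omega$, established via a path in the convex set $\Omega$. Your packaging of the matrix as a weighted graph Laplacian and invocation of the standard fact $\dim\ker L(G)=\#\{\text{components of }G\}$ is a cleaner way to phrase what the paper does by hand (the paper carries out the ``maximum-entry propagates along edges'' argument explicitly), and your genericity argument for the path avoiding the $(n-2)$-skeleton is more detailed than the paper's one-line assertion, but the substance is the same.
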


\begin{proof} By proposition \ref{prop:14} and the fact that $\sum_{i=1}^k
\partial w_i/\partial h_j = \partial (vol(\Omega)) /\partial h_j=0$,
it follows that the Hessian matrix $Hess(E)=[\frac{\partial
w_i(h)}{\partial h_j}] =[a_{ij}] $ is diagonally dominated  (i.e.,
all diagonal entries are positive and all off diagonal entries are
non-positive so that the sum of entries of each row is zero).
Therefore, it is positive semi-definite with kernel containing the
vector $(1,1,..., 1)$. To see that the kernel is 1-dimensional,
suppose $v=(v_1, ..., v_k)$ is a non-zero vector so that $Hess(E)
v^t=0$ ($v^t$ is the transpose of $v$). Let us assume without loss
of generality that $|v_{i_1}| = \max_{i} \{ |v_i|\}$ and $v_{i_1}
>0$. In this case, using $a_{i_1 i_1}v_{i_1} =\sum_{j \neq i_1}
a_{i_1j} v_j$ and $a_{i_1 i_1} =-\sum_{j \neq i_1} |a_{i_1 j}|$,
we see that $v_j=v_{i_1}$ for all indices $i$ with $a_{i_1 j} \neq
0$. It follows that index set $I= \{ i | v_i =\max_{j}
\{|v_j|\}\}$ has the following property. If $i_1 \in I$ and
$a_{i_1 i_2} \neq 0$, then $i_2 \in I$. We claim that $I=\{
1,2,..., k\}$, i.e., $v =v_1(1,1,...,1)$. Indeed, for any two
indices $i \neq j$, since $\Omega$ is connected and $W_r(h)$
  are convex,
 there exists a sequence of
indices $i_1=i, i_2, ..., i_m =j$ so that $W_{i_{s}}(h) \cap
\Omega$ and $W_{i_{s+1}}(h) \cap \Omega$ share a codimension-1
face for each $s$. Therefore $a_{i_s i_{s+1}} \neq 0$. Translating
this to the Hessian matrix $Hess(E) =[a_{ij}]$, it says that for
any two diagonal entries $a_{ii}$ and $a_{jj}$, there exists a
sequence of indices $i_1=i, i_2, ..., i_m=j$ so that $a_{i_s
i_{s+1}} <0$. This last condition together with the property of
$I$ imply $I=\{1,2,..., k\}$, i.e., $dim(Ker(Hess(E))) =0$.

\end{proof}

\subsection{$\nabla E$ is a local diffeomorphism}
With above preparations, we can show the gradient map $\nabla E$
is a diffeomorphism. Let $\Phi = \nabla E: H_0 \to W=\{ (A_1, ...,
A_k) \in \bf R^k$$|A_i > 0, \sum_{i=1}^k A_i =\int_{\Omega}
\sigma(x) dx$\} be the map sending $h$ to $(w_1(h), ..., w_k(h))$.
By the calculation above, $\Phi(h)$ is the gradient of $E|_{H_0}$ at
$h$. Since $E|_{H_0}$ has positive definite Hessian matrix in
$H_0$, its gradient $\Phi$ is an injective local diffeomorphism from
$H_0$ to $W$. But $dim(H_0)=dim(W)$, thus $\Phi(H_0)$ is open in $W$.
Thus to finish the proof that $\Phi(H_0)=W$, using the fact that $W$
is connected, we only need to show that $\Phi(H_0)$ is closed in $W$.
To see this, take a sequence of point $h^{(m)}$ in $H_0$ so that
$\Phi(h^{(m)})$ converges to a point $\alpha \in W$. We claim that
$\alpha \in \Phi(H_0))$. First note that $h^{(m)}$'s are bounded in
$\bf R^k$. Indeed, if not, we can choose a convergent subsequence,
still denoted by $h^{(m)}$, so that there are two indices  $i \neq
j$ with $h^{(m)}_i \to \infty$ and $h^{(m)}_j \to -\infty$. Since
$\Omega$ is compact, we see for $m$ large $x \cdot p_i + h_i^{(m)}
> x \cdot p_j + h^{(m)}_j$ for all $x \in \Omega$. This shows that
$W_j(h^{(m)}) \cap \Omega =\emptyset$ for $m$ large which
contradicts the assumption that $vol(W_j(h^{(m)}) \cap \Omega) >0$
for $m$ large. As a consequence, we can choose a convergence
subsequence, still denoted by $h^{(m)} \to h \in \bf R^k$. For
this $h$, by the continuity of the map $h \to (w_1(h), ...,
w_k(h)$ on $\bf R^k$, we have $\Phi(h) =\alpha$, i.e., $h \in H$ and
$\alpha \in \Phi(H_0)$.

As a consequence of the proof, we also obtained a new proof of the
infinitesimal rigidity theorem of Alexandrov.

\begin{corollary} (Alexandrov)  The map $\bigtriangledown E : H_0 \to W$
sending the normalized heights $h$ to the area vector $(w_1(h),
..., w_k(h))$ is a local diffeomorphism.
\label{cor:inft}
\end{corollary}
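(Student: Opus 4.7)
The plan is to derive Corollary~\ref{cor:inft} as an immediate consequence of the preceding analysis of the Hessian of $E$, without re-entering the global argument (injectivity, properness, surjectivity) that was required for the stronger statement that $\nabla E$ is a global diffeomorphism onto $W$. The heart of the matter is just the inverse function theorem applied pointwise.

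First, I would observe that the differential of $\Phi = \nabla E : H_0 \to W$ at a point $h \in H_0$ is exactly the restriction of the Hessian matrix $\mathrm{Hess}(E)(h)$ (viewed as a symmetric bilinear form on $\mathbb{R}^k$) to the tangent space $T_h H_0 = \{v \in \mathbb{R}^k : \sum_i v_i = 0\}$. This is purely a consequence of the identity $\partial E/\partial h_i = w_i(h)$ proved in \S3.2, together with the fact that $H_0$ is cut out by the single linear equation $\sum_i h_i = 0$.

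Next, I would invoke Corollary~\ref{coro:21}, which says that $\mathrm{Hess}(E)(h)$ is positive semi-definite with one-dimensional kernel spanned by $(1,1,\ldots,1)$. Since $(1,1,\ldots,1)$ is precisely orthogonal (in the obvious sense) to the hyperplane $T_h H_0$, the restriction of $\mathrm{Hess}(E)(h)$ to $T_h H_0$ has trivial kernel and is therefore positive definite. In particular, $d\Phi_h : T_h H_0 \to T_{\Phi(h)} W$ is a linear isomorphism (note $\dim T_h H_0 = k-1 = \dim T W$, since the constraint $\sum_i w_i(h) = \int_\Omega \sigma\,dx$ forces $\Phi(H_0) \subset W$, which is itself a hyperplane).

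Finally, I would conclude by the inverse function theorem: since $d\Phi_h$ is an isomorphism at every $h \in H_0$ and both $H_0$ and $W$ are smooth $(k-1)$-manifolds of the same dimension, $\Phi$ is a local diffeomorphism at every point. I expect no real obstacle here; the entire content has already been extracted in Corollary~\ref{coro:21}, and what remains is essentially bookkeeping about the tangent space to $H_0$ and the observation that the kernel direction of $\mathrm{Hess}(E)$ is transverse to this tangent space. The slight subtlety worth spelling out explicitly is the dimension-matching statement so that ``local diffeomorphism'' follows from ``injective differential.''
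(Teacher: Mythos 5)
Your proposal is correct and follows essentially the same route as the paper: in \S 3.4 the authors likewise deduce from Corollary~\ref{coro:21} that $E|_{H_0}$ has positive definite Hessian (the kernel direction $(1,\ldots,1)$ being transverse to the hyperplane $\sum_i h_i=0$) and conclude that its gradient $\Phi=\nabla E$ is an injective local diffeomorphism from $H_0$ to $W$, with Corollary~\ref{cor:inft} recorded as a byproduct. Your explicit bookkeeping of the tangent spaces, the dimension count $\dim T_h H_0 = k-1 = \dim T W$, and the appeal to the inverse function theorem merely spell out what the paper leaves implicit.
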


\subsection{ $\nabla u_b$ is an optimal transport map}
We reproduce the elegant proof by Aurenhammer et al \cite{aure1} 
here for completeness.
Notice the quadratic transport cost of $\nabla u_b$ is 
$\sum_{i=1}^k \int_{W_i} |p_i - x|^2 \sigma(x) dx$. From Proposition~\ref{prop:powerdiag}, $\{W_1, ..., W_k\}$ 
is the power diagram associated 
to $\{(p_i, w_i = -|p_i|^2 - 2b_i)\}$ in $R^d$. Suppose $\{ U_1, ..., U_k \}$ is any 
partition of $R^d$ so that $\int_{U_i} \sigma(x) dx =\int_{W_i} \sigma(x) dx, i = 1, 2, \cdots, k$. 
By the definition of the power diagram, we have 
$$\sum_{i=1}^k \int_{W_i} ( | x-p_i|^2 + w_i) \sigma(x) dx  \leq  \sum_{i=1}^k \int_{U_i} (|x-p_i|^2 + w_i) \sigma(x) dx.$$
Thus  
$$\sum_{i=1}^k \int_{W_i}  | x-p_i|^2  \sigma(x) dx )  \leq  \sum_{i=1}^k \int_{U_i} |x-p_i|^2 \sigma(x) dx. $$
This shows that $\nabla u_b$ minimizes the quadratic transport cost.

\section{A proof of Theorem \ref{thm:dmae}}

We fix $g_1, ..., g_m$ through out the proof. For simplicity, let
$p_{k+j} = v_j$ and $h_{k+j}=-g_j$ for $j=1,..., m$ and let
$$ W_i(h) =\{ x \in {\bf R^n}| x \cdot p_i +h_i \geq x \cdot p_j+h_j, j=1,..., k+m\}.$$
Define $$H =\{ h \in {\bf R^k} | vol(W_i(h)) >0, i=1,..., k+m\}.$$

\begin{lemma} (a) H is a non-empty open convex set in $\bf R^k$.

(b) For each $h \in H$ and $i=1,..., k$ and $j=1,..., m$, $W_i(h)$
is a non-empty bounded convex set and $W_{k+j}(h)$ is a non-empty
unbounded set.

\end{lemma}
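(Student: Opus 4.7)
The plan is to follow the template of Section~3.1, adapted to the setting where only the first $k$ components of $h$ are free variables while the last $m$ components are frozen at $-g_j$. I will establish convexity and openness of $H$ by the same witness-point argument used there, then construct an explicit element of $H$ to get non-emptiness, and finally deduce (b) from Proposition~\ref{prop:21} applied to the enlarged point set $\{p_1,\ldots,p_k,v_1,\ldots,v_m\}$.

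For part (a), write $H=\bigcap_{i=1}^{k+m} H_i$ with $H_i=\{h\in\mathbb{R}^k : \mathrm{vol}(W_i(h))>0\}$. By Proposition~\ref{prop:21}(a), membership in $H_i$ is equivalent to the existence of $x\in\mathbb{R}^n$ satisfying $x\cdot p_i+h_i > x\cdot p_j+h_j$ for all $j\neq i$. This is an open, strict-inequality condition on $h$, and a convex combination of two witnesses witnesses the convex combination of the corresponding heights, so each $H_i$, and hence $H$, is open and convex.

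The main obstacle is non-emptiness, since, unlike in the proof of Theorem~\ref{thm:otp}, we cannot uniformly shift all heights to enter the feasible region. I would construct an explicit $h\in H$ by setting $h_i=-|p_i|^2/2+M$ for $i=1,\ldots,k$ with $M$ a large constant. A Voronoi-type computation shows that for $l\neq i$ with $l\leq k$, the inequality $x\cdot p_i+h_i>x\cdot p_l+h_l$ is equivalent to $x$ lying strictly on the $p_i$-side of the perpendicular bisector of $p_i$ and $p_l$, which holds on a small ball around $p_i$. Taking $M$ large, the same ball also satisfies $x\cdot p_i+h_i>x\cdot v_j-g_j$ for all $j$, so each $W_i(h)$ with $i\leq k$ has non-empty interior. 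For the vertex cells, I would use that $v_j$ is a vertex of $\Omega$: since $v_j\notin\mathrm{conv}(\{v_l\}_{l\neq j})$ and each $p_i\in\mathrm{int}(\Omega)$, a separating functional yields $w_j\in\mathbb{R}^n$ with $w_j\cdot v_j>w_j\cdot v_l$ for $l\neq j$ and $w_j\cdot v_j>w_j\cdot p_i$ for every $i\leq k$; for $T$ sufficiently large (depending on $h$), a neighborhood of $Tw_j$ lies in $W_{k+j}(h)$. This same ray argument, valid for any $h$, simultaneously furnishes the vertex-cell part of the non-emptiness claim and the unboundedness half of (b).

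For the remainder of (b) there is only the boundedness of $W_i(h)$ for $i\leq k$, which is Proposition~\ref{prop:21}(c): the convex hull of the remaining points $\{p_l\}_{l\neq i,\,l\leq k}\cup\{v_1,\ldots,v_m\}$ contains $\Omega$, hence is $n$-dimensional, and $p_i\in\mathrm{int}(\Omega)$ places $p_i$ in the interior of this hull, so $W_i(h)$ is either empty or bounded; since $h\in H$ it is non-empty and therefore bounded. Throughout, the delicate point is the non-emptiness of $H$, which must exploit both the vertex structure of $\Omega$ (to handle $W_{k+j}$ independently of $h$) and the freedom to take $M$ large (to outrun the fixed boundary data $-g_j$).
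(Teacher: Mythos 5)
Your proof is correct, and its overall skeleton matches the paper's (witness-point argument for convexity and openness of each $H_i$, Proposition~\ref{prop:21}(c) for boundedness of the $W_i$, $i \leq k$), but your non-emptiness argument takes a genuinely different and more explicit route. The paper invokes Proposition~\ref{prop:21}(e) to get some $\bar h$ with $vol(W_i(\bar h)) > 0$ for the point set $\{p_1,\ldots,p_k\}$ alone, then shifts uniformly by $(t,\ldots,t)$ and traps the cells in a large compact ball $B$ on which the planes $x \cdot p_i + h_i$ dominate the fixed vertex planes $x \cdot v_j - g_j$, so that $W_i(\bar h) \cap B \subset W_i(h)$; for the vertex cells it simply cites Proposition~\ref{prop:21}(b), since $v_j$ is an extreme point of $\Omega$ and hence lies outside the convex hull of all the other points, for any $h$. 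You instead choose the explicit Voronoi heights $h_i = -|p_i|^2/2 + M$ (weights zero in Proposition~\ref{prop:powerdiag}), which guarantees each $W_i(h)$ contains a concrete ball about $p_i$ once $M$ outruns the boundary data, and you reprove the content of Proposition~\ref{prop:21}(b) for the vertex cells directly via a separating functional $w_j$ and the ray $\{T w_j : T \text{ large}\}$. What the paper's route buys is brevity, reusing already-established machinery (the inductive construction behind Proposition~\ref{prop:21}(e) and the Legendre-dual argument behind (b)); what yours buys is self-containedness and constructiveness: it produces an explicit initial point of $H$ (useful, e.g., as a starting point for the Newton iteration the paper advertises) and makes transparent that the vertex cells impose no constraint on $h$ at all, i.e., $H_{k+j} = {\bf R}^k$ for $j = 1,\ldots,m$. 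One small gloss worth making explicit in your write-up: the separation step needs $v_j \notin conv(\{v_l\}_{l\neq j} \cup \{p_1,\ldots,p_k\})$, which follows because the hypothesis $v_j \notin conv(\{v_l\}_{l\neq j})$ makes $v_j$ an extreme point of $\Omega$, and an extreme point is never a convex combination of other points of $\Omega$ (the $p_i$ being interior and the $v_l$ distinct from $v_j$); likewise your single constant $M$ must be chosen uniformly over the finitely many balls and vertex planes, which compactness permits.
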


\begin{proof}  The proof of convexity of $H$ is exactly the same
as that of \S 2.1. We omit the details. Also, by definition $H$ is
open.  To show that $H$ is non-empty, using proposition
\ref{prop:21}(e), there exists $\bar h \in \bf R^k$ so that for
all $i=1,..., k$, $vol( W_i(\bar h)) >0$. We claim for $t>0$ large
the vector $h=\bar h+(t,..., t) \in H$. Indeed, let $B$ be a large
compact ball so that $B \cap W_i(\bar h) \neq \emptyset$ for all
$i=1,..., k$. Now choose $t$ large so that
$$ \min_{ x \in B} \{ x \cdot p_i + h_i | i=1,2,...,k\} > \max_{ x
\in B}\{ x \cdot v_j + g_j |j=1,..., m\}.$$ For this choice of
$h$, by definition, $W_i(\bar h) \cap B \subset W_i(h)$.

Part (b) follows from proposition \ref{prop:21} (b) and (c).

\end{proof}

For $h \in H$ and $i=1,..., k$, let $w_i(h) =vol(W_i(h))>0$. For
each $h \in H$, by proposition \ref{prop:14} applied to a large
compact domain $X$ whose interior contains $\cup_{i=1}^k W_i(h)$,
we see that $w_i(h)$ is a differentiable function so that
$\frac{\partial w_i}{\partial h_j} =\frac{\partial w_j}{\partial
h_i}$ for all $i, j=1, ..., k$. Thus the differential 1-form $\eta
=\sum_{i=1}^k w_i(h) dh_i$ is a closed 1-form on the open convex
set $H$. Since $H$ is simply connected, there exists a
$C^1$-smooth function $E(h): H \to \bf R$ so that $\frac{\partial
E}{\partial h_i} =w_i(h)$.

\begin{lemma}\label{lemma:234}  The Hessian matrix $Hess(E)$ of $E$ is positive
definite for each $h \in H$. In particular, $E$ is strictly convex
and $\bigtriangledown E: H \to \bf R^k$ is a smooth embedding.
\end{lemma}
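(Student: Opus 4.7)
The plan is to realize the $k\times k$ Hessian of $E$ as a principal submatrix of a larger symmetric Laplacian-type matrix whose kernel is exactly the span of $(1,\dots,1)$, and then use the fact that freezing $m\ge 1$ coordinates kills this kernel.

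First I would, at any given $h\in H$, pick a large closed ball $B\subset\mathbb{R}^n$ whose interior contains the bounded cells $W_1(h),\dots,W_k(h)$; the same $B$ works for $h'$ in a neighborhood. Then $\widetilde W_i(h)=W_i(h)\cap B$, $i=1,\dots,k+m$, forms a convex subdivision of $B$. Momentarily treating the boundary heights $h_{k+j}=-g_j$ as free variables, Proposition~\ref{prop:14} (with $\sigma\equiv 1$) gives $C^1$ functions $\widetilde w_i(h)=vol(\widetilde W_i(h))$ satisfying the off-diagonal formula
\[ \frac{\partial \widetilde w_i}{\partial h_j} \;=\; -\frac{area(\widetilde W_i\cap \widetilde W_j)}{|p_i-p_j|}\;\le\;0 \qquad (i\ne j). \]
Invariance of the subdivision under the uniform shift $h\mapsto h+c(1,\dots,1)$ yields the zero row-sum identity $\sum_{j=1}^{k+m}\partial \widetilde w_i/\partial h_j = 0$ for every $i$. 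For $i\le k$ the bounded cell $W_i(h)$ lies inside $B$, so $\widetilde w_i=w_i$ in a neighborhood of $h$; hence $Hess(E)(h)$ is exactly the upper-left $k\times k$ principal block of the symmetric $(k+m)\times(k+m)$ matrix $\widetilde A(h)=[\partial \widetilde w_i/\partial h_j]$.

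Next I would repeat the graph-Laplacian argument of Corollary~\ref{coro:21} verbatim: $\widetilde A$ is diagonally dominant with non-positive off-diagonal entries and zero row sums, so it is positive semi-definite with $(1,\dots,1)\in\ker\widetilde A$; and the adjacency graph of the top-dimensional cells of any convex subdivision of the connected set $B$ is connected (any two cell interiors can be joined by a generic segment that crosses only codimension-$1$ strata), so $\ker\widetilde A$ equals the span of $(1,\dots,1)$ exactly. Now for any nonzero $v\in\mathbb{R}^k$, set $\tilde v=(v_1,\dots,v_k,0,\dots,0)\in\mathbb{R}^{k+m}$; then
\[ v^\top Hess(E)\, v \;=\; \tilde v^\top \widetilde A\, \tilde v \;\ge\; 0, \]
with equality forcing $\tilde v$ to be a scalar multiple of $(1,\dots,1)$. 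Since $m\ge 1$ and the last $m$ coordinates of $\tilde v$ vanish, this forces $\tilde v=0$, hence $v=0$, a contradiction. Thus $Hess(E)(h)$ is positive definite at every $h\in H$.

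Strict convexity of $E$ on the convex open set $H$ then follows by integrating the positive definite Hessian along line segments, which in turn makes $\nabla E$ injective (two critical points of $E(h)-a\cdot h$ would both be global minimizers of a strictly convex function). Combined with the local-diffeomorphism property from the non-singular Jacobian $Hess(E)$, this shows $\nabla E:H\to\mathbb{R}^k$ is an injective smooth immersion with open image, i.e.\ a smooth embedding (in fact a diffeomorphism onto its image). The main care required is in the first step: one must verify that the phantom extension to all $k+m$ variables really produces a symmetric matrix with zero row sums whose upper-left $k\times k$ block coincides with $Hess(E)$. Once that bookkeeping is in place, the rest is essentially the same graph-Laplacian computation as in Section~3, with the new ingredient being only the trivial observation that a vector supported on the first $k$ coordinates cannot be a nonzero multiple of $(1,\dots,1)$ when $m\ge 1$.
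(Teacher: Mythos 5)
Your proof is correct, and it takes a genuinely different route to positive definiteness than the paper, even though the analytic input is identical. The paper works directly with the $k\times k$ matrix $[\partial w_i/\partial h_j]$: Proposition~\ref{prop:14} gives the non-positive off-diagonal entries, and the key extra observation is that the column sum $\sum_{i=1}^k \partial w_i/\partial h_j$ is non-negative, and strictly positive exactly when the bounded cell $W_j(h)$ shares a codimension-$1$ face with some unbounded cell $W_{k+\mu}(h)$ (raising $h_j$ then grows the total volume of the bounded cells). It then reruns the maximum-principle argument of Corollary~\ref{coro:21}: any null vector must be constant, and the one row with strict diagonal dominance forces that constant to be zero. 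You instead symmetrize the boundary contribution away: by temporarily letting the frozen heights $h_{k+j}=-g_j$ vary and clipping with the ball $B$, you assemble the full $(k+m)\times(k+m)$ weighted graph Laplacian $\widetilde A$ (symmetric by Proposition~\ref{prop:14} applied on $B$, zero row sums by invariance under the uniform shift, kernel equal to $\mathrm{span}(1,\dots,1)$ by connectivity of the cell-adjacency graph of the subdivision of $B$), identify $Hess(E)$ with its upper-left $k\times k$ principal block, and conclude by the standard fact that the Dirichlet restriction of a connected graph Laplacian with $m\ge 1$ deleted vertices is positive definite. In both proofs the strictness ultimately comes from the same geometric source --- adjacency of bounded cells to the boundary cells --- but your packaging is cleaner and more conceptual, at the modest price of the bookkeeping you correctly flag: that Proposition~\ref{prop:14} applies to all $k+m$ indices inside $B$ (all cells meet $B$ in positive volume for $B$ large), that a single $B$ serves a whole neighborhood of $h$ (the bounded cells cannot escape a fixed compact set as $h'\to h$, by the ray argument underlying Proposition~\ref{prop:21}(c)), and that $\widetilde w_i = w_i$ near $h$ for $i\le k$, so the block identification is exact. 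Your concluding chain (positive definite Hessian, hence strict convexity on the convex set $H$, hence injective gradient, which together with the non-singular Jacobian gives the embedding) matches the paper's.
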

\begin{proof}
By the same proof as in \S2.3, we have for $i \neq j$, $\partial
w_i(h)/\partial h_j = -\frac{1}{|p_i-p_j|} Area(F)<0$
if $W_i(h)$ and $W_j(h)$ share a codimension-1 face $F$ and it is
zero otherwise. Furthermore, for each $j=1,...,k$, $\sum_{i=1}^k
\partial w_i/\partial h_j = \frac{\partial ( \sum_{i=1}^k
w_i(h))}{\partial h_j} >0$ if $W_j(h)$ and one of $W_{\mu+k}(h)$
share a codimension-1 face. It is zero otherwise. This shows the
Hessian matrix $Hess(E)=[a_{ij}]$ is diagonally dominated so that
$a_{ij} \leq 0$ for all $ i \neq j$ and $a_{ii} \geq \sum_{ j \neq
i} |a_{ij}|$. Thus $Hess(E)$ is positive semi-definite. To show
that it has no kernel, we proceed with the same argument as in the
proof of corollary \ref{coro:21}. The same argument shows that if
$b=(b_1, ..., b_k)$ is a null vector for $[a_{ij}]$, then
$b_1=b_2=...=b_k$. On the other hand, there is an index $i$ so
that $W_i(h)$ and one of $W_{j+k}(h)$ share a codimension-1 face,
i.e, $a_{ii} > \sum_{i=1}^k |a_{ij}|$. Using $\sum_{j=1}^k
a_{ij}b_1=0$, we see that $b_1=0$, i.e., $b=0$. This establishes
the lemma.

\end{proof}

Now we prove theorem \ref{thm:dmae} as follows. Let $A =\{(A_1,
..., A_k) | A_i >0\}$ and $\Phi=\bigtriangledown E: H \to A$ be the
gradient map. By lemma \ref{lemma:234}, $\Phi$ is an injective local
diffeomorphism from $H$ to $A$. In particular, due to
$dim(H)=dim(A)$,  $\Phi(H)$ is open in $A$. To finish the proof that
$\Phi(H)=A$, since $A$ is connected, it suffices to prove that $\Phi(H)$
is closed in $A$. Take a sequence of points $h^{(i)}$ in $H$ so
that $\Phi(h^{(i)})$ converges to a point $a \in A$. We claim that $a
\in \Phi(H)$. After taking a subsequence, we may assume that
$h^{(i)}$ converges to a point in $[-\infty, \infty]^k$. We first
show that $\{h^{(i)}\}$ is a bounded set in $\bf R^k$. If
otherwise, there are three possibilities: (a) there is $j$ so that
$h^{(i)}_j \to -\infty$ as $i \to \infty$, (b) there are two
indices $j_1$ and $j_2$ so that $\lim_{i \to \infty} h^{(i)}_{j_1}
=\infty$ and \{$h^{(i)}_{j_2}$\} is bounded, and (c) for all
indices $j$, $\lim_{i \to \infty} h^{(i)}_j =\infty$.  In the
first case (a), due to $p_j \in int(conv(v_1, ..., v_m))$ and
$h^{(i)}_j$ is very negative, $x \cdot p_j + h^{(i)}_j < \max\{x
\cdot v_{j'} + g_{j'}| j'=1,..., m\}$ for $i$ large for all $x$.
This implies for $i$ large $W_j(h^{(i)}) =\emptyset$ which
contradicts the assumption that $\lim_{i} \Phi(h^{(i)}) =a \in A$. In
the case (b) that \{$h^{(i)}_{j_2}$\} is bounded, then the sets
$W_{j_2}(h^{(i)})$ lies in a compact set $B$. For $i$ large,  $ x
\cdot p_{j_1} + h^{(i)}_{j_1} \geq \max\{ x \cdot v_j+g_j, x \cdot
p_{j_2}+h^{(i)}_{j_2})\}$ for all $x \in B$. This implies that
$W_{j_2}(h^{(i)}) =\emptyset$ for large $i$ which contradicts the
assumption that $\lim_{i} \Phi(h^{(i)}) =a \in A$. In the last case
(c), since for each $j$, $\lim_{i \to \infty} h^{(i)}_j =\infty$,
for any compact set $B$, there is an index $i$ so that $B \subset
\cup_{ \mu=1}^k W_{\mu}(h^{(i)})$. This implies that the sum of
the volumes $\sum_{\mu=1}^k vol(W_{\mu}(h^{(i)}) $ tends to
infinity which again contradicts the assumption $\lim_{i}
\Phi(h^{(i)})=a \in A$.

Now that $h^{(i)}$ is convergent to a point $h$ in $\bf R^k$, by
the continuity of the map sending $h$ to $(w_1(h), ..., w_k(h))$
on $\bf R^k$, we see that $\Phi(h) =a$. This shows $h \in H$ and $a
\in \Phi(H)$, i.e., $\Phi(H)$ is closed in $A$.

Hence, given any $(A_1, ..., A_k) \in A$, there exists a unique $h
\in H$ so that $\Phi(h) =(A_1, ..., A_k)$. Let $u =\max\{ x \cdot p_i
+h_i | i=1,..., k+m\}$ be the PL convex function on $\bf R^n$ and
$w$ be its dual. By corollary 2.1, we conclude that the vertices
of $w$ are exactly $\{v_i, p_j | i, j\}$ with $w(v_i) =g_i$ and
$w(p_j)=-h_j$ so that the discrete Hessian of $w$ at $p_i$, which
is $w_i(h)=A_i$. Furthermore, by proposition \ref{prop:21}, the
associated convex subdivision of $w$ on $\Omega$ has exactly the
vertex set $\{v_1, ..., v_k, p_1, ..., p_k\}$.

\bibliographystyle{plain}
\bibliography{OTP_MAE}
\end{document}